	 \definecolor{darkred}{rgb}{0.75,0,0}
	 \definecolor{darkgreen}{rgb}{0,0.5,0}
	 \definecolor{darkblue}{rgb}{0,0,0.75}
  	 \definecolor{darkorange}{rgb}{1,0.9,0.1}
	 \definecolor{dark}{rgb}{0,0,0}
\newtheorem{theorem}{Theorem}
\newtheorem{definition}{Definition}
\newtheorem{remark}{Remark}
\newtheorem{proof}{Proof}
\newtheorem{lemma}{Lemma}
\newtheorem{corollary}{Corollary}
\newtheorem{example}{Example}
\begin{document}

\preprint{APS/123-QED}

\title{Convergence of time-delayed opinion dynamics with complex interaction types}

\author{Lingling Yao}
\affiliation{%
Center for Systems and Control, College of Engineering, Peking University, Beijing 100871, China}
\author{Aming Li}
\thanks{Corresponding author: amingli@pku.edu.cn}
\affiliation{%
Center for Systems and Control, College of Engineering, Peking University, Beijing 100871, China}
\affiliation{
Center for Multi-Agent Research, Institute for Artificial Intelligence, Peking University, Beijing 100871, China}

%\author{Lingling Yao}
%\affiliation{%
%Center for Systems and Control, College of Engineering, Peking University, Beijing 100871,  People's Republic of China}
%\author{Aming Li}
%\thanks{Corresponding author: amingli@pku.edu.cn}
%\affiliation{%
%Center for Systems and Control, College of Engineering, Peking University, Beijing 100871, People's Republic of China}

%\affiliation{
%Center for Multi-Agent Research, Institute for Artificial Intelligence, Peking University, Beijing 100871, People's Republic of China}

% \collaboration{CLEO Collaboration}%\noaffiliation

\date{\today}% It is always \today, today,
%  but any date may be explicitly specified

\begin{abstract}
In opinion dynamics, time delays in agent-to-agent interactions are ubiquitous, which can substantially disrupt the dynamical processes rooted in agents' opinion exchange, decision-making, and feedback mechanisms. However, a thorough comprehension of quantitative impacts of time delays on the opinion evolution, considering diverse interaction types and system dynamics, remains absent. In this paper, we conduct a systematic investigation into the convergence and the associated rate of time-delayed opinion dynamics with diverse interaction types in both discrete-time and continuous-time systems. For the discrete-time system, we commence by establishing sufficient conditions for its convergence on arbitrary signed interaction networks. These conditions show that the convergence is determined solely by the topology of the interaction network and remains impervious to the magnitude of the time delay. Subsequently, we examine the influence of random and other interaction types on the convergence rate and discover that time delays tend to decelerate this rate. Regarding the continuous-time system, we derive the feasible domain of the delay that ensures the convergence of opinion dynamics, revealing that, unlike the discrete-time scenarios, large time delays can instigate the divergence of opinions. Specifically, we prove that for both random and other interaction types, small delays can expedite the convergence of continuous-time system. Finally, we present simulation examples to demonstrate the effectiveness and robustness of our research findings.
\end{abstract}

\maketitle

\section{Introduction}
In the field of opinion dynamics, researchers primarily investigate the phenomena of how diverse perspectives are formed and evolve within populations regarding specific events, influenced by individual decision-making processes and external information. Over the past decades, the analysis of opinion dynamics has received considerable attention from social scientists, physicists, mathematicians, and researchers in the emerging interdisciplinary field of complex systems, due to its wide applications in the stock market, institutional change, election forecasting, task allocation, and engineering systems \cite{Etes15,Pros17,Pros172,Yang23,yao22}.

A model that strikes a balance between the conflicting demands of capturing the inherent complexity of social dynamics and maintaining mathematical tractability is fundamental for the study of opinion dynamics in social networks. The DeGroot model, recognized as one of the earliest and most influential models in the field of opinion dynamics, demonstrates the fundamental human cognitive ability to take convex combinations when integrating relevant information \cite{De:74}. To more accurately capture the complexities of the real social world, a variety of extended models have been proposed, such as the Friedkin-Johnsen model \cite{Fri99,Tia18}, the bounded confidence model \cite{Heg02,Wedin}, the multidimensional model \cite{Fri16}, the expressed and private model \cite{Ye19}, the appraisal dynamics model \cite{Tia22,liu24}, and the Altafini model \cite{Al13}.

 In the real world, due to the limited public resources and the universality of social contradictions, it is inevitable that there are antagonistic or competitive relationships in interaction networks \cite{Al13,Liu19,shi19,Suo,pros16,Liu17,GUAN2023110694,pan}. The model with both friendly and antagonistic interactions called the continuous-time Altafini model was first proposed in \cite{Al13}. In the Altafini model, negative edges are introduced to represent antagonistic interactions, and positive edges correspond to friendly interactions between individuals. Based on the assumption that the interaction network is digon sign-symmetric and strongly connected, the system achieves bipartite consensus when the network is structurally balanced but reaches stability (this phenomenon is also called neutralization \cite{Liu19}) when the network is structurally unbalanced. Following this pioneering work, some scholars have relaxed the requirements for topology and analyzed a broader range of consensus behaviors such as interval bipartite consensus \cite{shi19,Suo}. Meanwhile, the continuous-time Altafini model and its discrete-time version in time-varying signed graphs are further studied in \cite{pros16,Liu17}. For the discrete-time Altafini model with time-varying signed digraph, under the assumption of repeatedly jointly strong connectivity and the existence of self-loop for each node, the researchers establish some necessary and sufficient conditions for exponential convergence \cite{Liu17}. At present, the Altafini model and its extensions have been extensively investigated, and some important results have been derived, including those related to the bounded confidence model and the Friedkin-Johnsen model \cite{Su23,AL18,Carmela24,Gio24}. To simplify the analysis of the discrete-time case, existing literature often assumes that each node has a self-loop, meaning that an individuals' current opinions always directly affect their opinions at the next step, which is interpreted as each individual being self-confident. However, in reality, this condition is quite restrictive, as there exist individuals who lack self-confidence. The necessity of the assumption is unclear, since the system's convergence issue remains unresolved without self-loops.

Time delay is often inevitable in biology, ecology, chemistry,
physics, and numerous engineering disciplines. Moreover, many practical systems such as sampled-data systems and network control systems can be modeled as
time-delayed systems, where large amounts of digital data are transferred and processed. Therefore, it is of importance to study the convergence problem for time-delayed systems from both theoretical and practical points of view. Different from the delay-free systems, the delayed systems may belong to the class of functional differential equations which are infinite dimensional, rather than ordinary differential equations. The methods applicable to finite-dimensional systems are not always suitable for infinite-dimensional systems, and as a result, analytical solutions are often elusive. Up to now, some approaches have been proposed to address the convergence of time-delay systems, such as the Nyquist stability criteria, the Lyapunov-Krasovskii function method, and the linear matrix inequality technique \cite{Olf04,XI08,Had14,Vu10,Zeng15,Cep15,BL08,MA23,xiao,yangy23}. The effect of time delays on agents' consensus behavior was first analyzed for the continuous-time model in \cite{Olf04}, and an explicit formula of the admissible delay bound under the undirected interaction networks was established. Subsequently, there have been extensive studies and significant research findings on consensus problems of first-order \cite{XI08,Had14}, second-order, general linear \cite{Vu10,Zeng15}, high-order \cite{Cep15}, and non-linear systems \cite{BL08}. It is worth noting that the intentionally induced delays can have a positive effect on the
consensus of second-order systems \cite{MA23}. However, further investigation is required to elucidate the positive effects on the convergence of opinion dynamics associated with the occurrence of delay.
%To the best of our knowledge, the existing literature seldom addresses the impact of time delays on the convergence rate of both discrete-time and continuous-time delayed systems, especially in the context of diverse interaction types.

The convergence rate determines  the swiftness with which a system transitions from its initial state to a state of consensus, providing an important quantitative indicator for decision-making and engineering practice \cite{Ghader}. For instance, in the decision-making process, an emergent group opinion or consensus often has to be made within a short finite time. In the engineering field, it is desirable for disaster response and relief operations to rapidly attain the vicinity of the drop-off point. Therefore, some researchers have shown particular interest in it and made some remarkable progress. The existing literature mainly addresses accelerated convergence from two perspectives. One perspective is identifying the optimal network topology, within structural constraints, to maximize the convergence rate for a given protocol (e.g., by optimizing the weight matrix \cite{A17}). For directed acyclic graphs, enhancing the convergence rate can be achieved by adding certain edges, as suggested in \cite{Zhang17}. The other perspective is seeking the optimal protocol to improve the convergence rate for a given network topology (e.g., utilizing finite-time control \cite{Wan10,Xia14,ANGULO2012606}, employing graph signal processing \cite{Yi20}, and introducing individual memory \cite{Yi23, REN2022}). Despite the extensive research and abundant outcomes garnered by systems with time delays, the convergence rate of opinion dynamics with time delay on signed networks remains unclear.
%Please refer to \cite{Zhang144,Wang2020} for more accelerated algorithms.
%Some natural question arises: Can the time delays bring a positive effect on the convergence rate?
%Different from the first-order systems, some researchers prove that the admissible delay bound not only relays on the interaction network but also the coupling parameters.

This paper conducts an in-depth study on the convergence and convergence rates of delayed systems with complex interaction types by addressing the following key problems:
\\
(I). Is the presence of self-loops in the interaction network a necessary condition to ensure the convergence of discrete-time systems? If not, how to provide a class of interaction networks without self-loops but can still ensure the convergence of discrete-time systems?
\\
(II). Does the existence of time delay lead to the divergence of opinions? If so, what are the feasible regions of the delays to ensure the convergence of delayed systems with complex interaction types?
\\
(III). What are the effects of the delays on the convergence rate of delayed systems with complex interaction types?
 %This paper aims to deepen the understanding of the influence of interaction types on delayed opinion dynamics by investigating the convergence of discrete-time and continuous-time systems with complex interaction types. For discrete-time systems, we find that as the time delay increases, the system remains convergent, but the rate of convergence gradually slows down. In contrast, for continuous-time systems, there exists an upper bound on the time delay; if the delay is exceedingly large, the system will cease to converge. Interestingly, within a certain range where the time delay is less than a specific threshold, the delay can actually accelerate the system's convergence monotonically.

Motivated by previous discussions, we aim to develop a framework for analyzing the convergence and convergence rate of delayed opinion dynamics with complex interaction types for both discrete-time and continuous-time systems. The main contributions of this article are summarized as follows.
\begin{itemize}
  \item For the discrete-time system, from requiring self-loops in each closed strongly connected component to allowing the absence of self-loops, we have demonstrated that it can still converge under certain conditions. Specifically, convergence can still be achieved for both random mixture interaction and other interaction scenarios without self-loop, indicating that the presence of self-loop is not a necessary condition for the convergence of discrete-time systems (see Theorems \ref{the1}-\ref{the3}).
  \item For the discrete-time system, rigorous theoretical analysis of the impact of time delays on its convergence rate has been conducted for random mixture interaction and complex mixed interaction scenarios, which can be regarded as an extension and expansion of the research of delay-free systems (see Theorem \ref{the4}).
  \item For the continuous-time system, the feasible regions of time delays to ensure its convergence for random mixture interaction and complex mixed interaction scenarios are provided, establishing a direct connection between the convergence of delayed systems and some key factors such as population size, network connectivity, and various interaction types (see Theorems \ref{the8}-\ref{the9}, and Corollary \ref{cor1}).
  \item For the continuous-time system, we prove that there always exists a small delay that can accelerate its convergence for both random mixture interaction and complex mixture scenarios (see Theorem \ref{the12}).
\end{itemize}
~~The rest of this paper is organized as follows. Section I offers some useful preliminary knowledge of signed graphs and the construction of networks with diverse types considered in this paper. In Section II, we introduce the opinion dynamics model and some useful lemmas. In Section III, we present our main theoretical results. The theoretical results are verified by numerical simulations in Section IV. Finally, Section V concludes this paper.

The notations and abbreviations used in this paper are listed in Table \ref{tab1}.
\begin{table}[ht]\label{tab1}
\centering{Table 1: notations}
\begin{tabular}{p{2.3cm}p{5.2cm}}
\hline Symbols & Definitions \\
\hline
$\mathbb{C}$ & set of complex numbers \\
$\mathbb{C}_{-}$ & set of complex numbers with negative real
part \\
$\mathbb{R}$ & set of real numbers \\
$\mathbb{N}$ & set of negative integers\\
$\mathbb{Z}$ & set of integers\\
$\mathbb{R}^{m\times n}$ & set of $m\times n$ real matrices \\
$I$ & any identity matrix with proper dimension \\
$\mathrm{diag}\{a_{1}, \cdots, a_{n}\}$ & diagonal matrix with diagonal elements $a_{1}, \cdots, a_{n}$\\
$A^{\top}$ & the transpose of matrix $A$\\
$A^{-1}$ & the inverse of matrix $A$\\
$|A|$ & nonnegative matrix in which each element $|A|_{ij}$ equals $|A_{ij}|$\\
$|A|_{i}$ & sum of the elements in the $i$th row of matrix $|A|$ \\
$\lambda_{i}(A)$ & the $i$th eigenvalue of matrix $A$ \\
$\mathrm{Re}(\lambda_{i}(A))$ & real part of $\lambda_{i}(A)$ \\
$\mathrm{Im}(\lambda_{i}(A))$ & imaginary part of $\lambda_{i}(A)$ \\
$\rho(A)$ & spectral radius of matrix $A$ \\
$[n]$ & set of $1,2,\cdots, n$ \\
$\mathrm{sgn}(x)$ & sign function with respect to $x$\\
\hline
\end{tabular}
\end{table}
\section{Preliminaries and model formulation}\label{sect1}
\subsection{Signed networks}
 Let $\mathcal{G}(W)=(\mathcal{V},\mathcal{E},W)$ be a (weighted) signed graph of $n$ nodes, with the nodes set $\mathcal{V}\subseteq\{v_{1},\dots, v_{n}\}$, the edges set $\mathcal{E}=\mathcal{V}\times \mathcal{V}$, and the signed weighted matrix $W=[w_{ij}]$, $i, j\in[n]$. An edge $(v_{i}, v_{j})\in\mathcal{E}$ means that node $j$ can get information from node $i$. $w_{ij}\neq0\Leftrightarrow\left(v_j, v_i\right) \in \mathcal{E}$. Moreover, the weight $w_{ij}>0$ $(w_{ij}<0)$ denotes the trust (mistrust) that individual $j$ has on individual $i$.  A digraph $\mathcal{G}(W)$ is a signed network if there exist at least one negative edge in $\mathcal{G}(W)$. A signed digraph $\mathcal{G}(W)$ is structurally balanced if there exists a bipartition $\left\{\mathcal{V}_1, \mathcal{V}_2\right\}$ of the vertices, where $\mathcal{V}_1 \cup \mathcal{V}_2=\mathcal{V}$ and $\mathcal{V}_1 \cap \mathcal{V}_2=\varnothing$, such that $w_{i j} \geq 0$ for $\forall v_i, v_j \in \mathcal{V}_l$ $(l \in\{1,2\})$ and $w_{i j} \leq 0$ for $\forall v_i \in \mathcal{V}_l, v_j \in$ $\mathcal{V}_q, l \neq q$ $(l, q \in\{1,2\})$; and $\mathcal{G}$ is structurally unbalanced otherwise \cite{Al13}.
 A directed walk of $\mathcal{G}(W)$ is a sequence of edges $\{(v_{1},v_{2}),(v_{2},v_{3}),\cdots,(v_{m-1},v_{m})\}\subseteq\mathcal{E}$.
Moreover, a directed walk in which all vertices are distinct is said to be a directed path. In contrast, if all the vertices in a directed walk are distinct except for the same starting and ending vertices, it is called a directed cycle. A signed directed cycle with an even/odd number of edges having negative weights is called a positive/negative directed cycle \cite{Al13}. If all nodes in this sequence are distinct, then it is called a path. Agent $j$ is a reachable node of agent $i$ if there exists a path from agent $i$ to agent $j$. A graph is periodic if it has at least one cycle and the length of any cycle is divided by some integer $h>1$. Otherwise, a graph is called aperiodic. A strongly connected subgraph $\mathcal{G}^{'}$ of digraph $\mathcal{G}$ is called a strongly connected component (SCC) if it is not contained by any larger strongly connected subgraph. A SCC without incoming arcs from other SCCs is called a closed SCC (CSCC); otherwise, it is called an open SCC (OSCC). For a digraph $\mathcal{G},$ $\left\{\mathcal{V}_1, \ldots, \mathcal{V}_n\right\}$ is called a partition of $\mathcal{G}$ if $\mathcal{V}_i \neq$ $\varnothing, \bigcup_{i=1}^n \mathcal{V}_i=\mathcal{V}$, and $\mathcal{V}_i \cap \mathcal{V}_j=\varnothing$ for $i \neq j$, where $\mathcal{V}_i$ is the node set corresponding to the subnetwork $\mathcal{G}_i$. Then, the compressed graph $\mathbb{G}$ of $\mathcal{G}$ is defined as follows: (a) The node set of $\mathbb{G}$ consists of $\mathcal{G}_1, \ldots, \mathcal{G}_n$. That is, each $\mathcal{G}_i$ is deemed as a node in $\mathbb{G}$. (b) There is an edge $\left(\mathcal{G}_i, \mathcal{G}_j\right) \in \mathcal{E}(\mathbb{G})$ from node $\mathcal{G}_i$ to node $\mathcal{G}_j$ in $\mathbb{G}$ if and only if there exist nodes $u \in \mathcal{V}_i$ and $v \in \mathcal{V}_j$ such that $(u, v) \in \mathcal{E}(\mathcal{G})$.
\subsection{Opinion dynamics on signed networks}
The discrete-time and continuous-time Altafini models without delays have been extensively studied \cite{Al13,Liu19}. In this paper, we examine these models with delays included and emphasize the significant impact that delays and complex interaction types have on these models.
\subsubsection{Discrete-time opinion dynamics}\label{sub1}
The discrete-time opinion dynamics with delays can be described by
\begin{equation}\label{equ1}x_i(k+1)=w_{i i}x_i(k)+\sum_{j\in\mathcal{N}_{i},j\neq i} w_{i j}x_j(k-\tau_d),\end{equation}
   where $x_i(k)\in[-1,1]$ represents the opinion of individual $i$ at time $k$ and  $\tau_d\in\mathbb{N}$ is the
transmission delay of information from $v_{j}$ to $v_{i}$. $w_{i j}\in[-1,1]$ reflects the interpersonal influence that agent $j$ has on agent $i$ and $w_{i i}\geq0$ denotes the self-loop on node $i$, capturing the self-confidence level of individual $i$. Moreover, $\sum\limits_{j=1}^{n}|w_{ij}|=1$ for $\forall i\in[n]$. $x_i(k)>0$, $x_i(k)<0$ and $x_i(k)=0$ represents the support, rejection, and neutrality of individual $i$, respectively. In addition, when $\tau_{d}>k$, $x_i(k+1)=x_i(k)=\cdots=x_i(0)$.

 Let $X(k+1)=[x_1(k+1),\cdots,x_n(k+1)]^{\top}$, then the system (\ref{equ1}) can be rewritten as
  \begin{equation}\label{equ2}X(k+1)=\hat{W}X(k)+\tilde{W}X(k-\tau_d),\end{equation}
   where matrix $W=\tilde{W}+\hat{W}$ is a signed stochastic matrix, i.e., $w_{i j}\in[-1,1]$ and $\sum\limits_{j=1}^{n}|w_{i j}|=1, \forall i\in[n]$.  $\hat{W}=\mathrm{diag}\{w_{11},\cdots,w_{nn}\}$ and  $\Xi=\mathrm{diag}\{\xi_{1},\cdots,\xi_{n}\}$. In order to facilitate our analysis, define $Y(k+1)=[x(k+\tau_{d})^{\top},x(k+\tau_{d}-1)^{\top},\cdots,x(k)^{\top}]^{\top}$, then system (\ref{equ2}) can be equivalently converted into \begin{equation}\label{equ3}Y(k+1)=AY(k),\end{equation}
 where \begin{equation}\label{afrom}
 A=[a_{ij}]=\begin{bmatrix}
\hat{W} & 0 & \cdots & 0 & \tilde{W} \\
I & 0 & \cdots & 0 & 0 \\
0 & I & \cdots & 0 & 0 \\
\vdots & \vdots & \ddots & \vdots & \vdots \\
0 & 0 & \cdots & I & 0
\end{bmatrix}\in\mathbb{R}^{n(\tau_{d}+1)\times n(\tau_{d}+1)}\end{equation} and $\sum\limits_{j=1}^{n}|a_{i j}|=1, \forall i\in\{1,2,\cdots,n(\tau_{d}+1)\}$.
\\
\indent
Hence, the convergence of the system (\ref{equ1}) can be equivalently converted into that of the augmented system (\ref{equ3}), which can be regarded as the system with $n(\tau_{d}+1)$ individuals under the signed communication topology $\mathcal{G}(A)$.
\begin{remark}
The signed digraph $\mathcal{G}(A)$ can be regarded as a multi-layer network, where there are $n$ layers and $\tau_{d}+1$ nodes in each layer. The network partition based on $n$ layers of $\mathcal{G}(A)$ can be useful for clarifying the specific relationship concerning the properties of structural balance and topological structure between $\mathcal{G}(W)$ and $\mathcal{G}(A)$ (see Lemmas 1-3 for details).
\end{remark}
Let $V_{i}:= \{i, n+i, 2n+i, \cdots, n\tau_{d}+i\}$, $i \in [n]$. Then, $\{V_{1}, V_{2}, \cdots, V_{n}\}$ is a partition of $\mathcal{G}(A)$, where $\mathcal{G}_{1},\cdots,\mathcal{G}_{n}$ can be deemed as $n$ nodes in the compressed graph $\mathbb{G}$ corresponding to this partition. Furthermore, based on this partition, we give the following lemmas concerning the compressed graph $\mathbb{G}$ and $\mathcal{G}(W)$.
\begin{lemma}\label{lem1}
There exists an edge pointing from node $\mathcal{G}_j$ to node $\mathcal{G}_i$ in the compressed graph $\mathbb{G}$ of $\mathcal{G}(A)$ if and only if there exists an edge pointing from node $j$ to node $i$ in $\mathcal{G}(W)$, $i, j\in[n]$.
\end{lemma}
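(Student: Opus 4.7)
The plan is to read the claim directly off the block structure of the augmented matrix $A$ and the partition $V_i=\{i,n+i,2n+i,\ldots,n\tau_d+i\}$. Since $(v_p,v_q)\in\mathcal{E}(\mathcal{G}(A))$ iff $a_{qp}\neq 0$, and an edge in $\mathbb{G}$ from $\mathcal{G}_j$ to $\mathcal{G}_i$ amounts to the existence of some $u=\ell n+j\in V_j$ and $v=mn+i\in V_i$ with $a_{mn+i,\ell n+j}\neq 0$, the whole problem reduces to locating nonzero entries of $A$ relative to the partition.

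First, I would catalogue the nonzero block positions of $A$ from \eqref{afrom}: block $(0,0)$ equals $\hat W=\mathrm{diag}(w_{11},\ldots,w_{nn})$, block $(0,\tau_d)$ equals $\tilde W$ (whose nonzero off-diagonal entries are exactly the $w_{ij}$ with $i\neq j$), and each block $(m,m-1)$ for $m\geq 1$ equals $I$. In particular, the identity blocks can only produce nonzero entries $a_{mn+i,(m-1)n+j}$ with $i=j$, so they carry edges that stay inside a single $V_i$; and the diagonal block $\hat W$ carries only self-loops at the nodes $i\in V_i$.

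For the direction ($\Leftarrow$), suppose $(v_j,v_i)\in\mathcal{E}(\mathcal{G}(W))$, i.e.\ $w_{ij}\neq 0$. If $i\neq j$, then $\tilde W_{ij}=w_{ij}\neq 0$, so $a_{i,\tau_d n+j}\neq 0$; since $\tau_d n+j\in V_j$ and $i\in V_i$, the defining condition of $\mathbb{G}$ yields $(\mathcal{G}_j,\mathcal{G}_i)\in\mathcal{E}(\mathbb{G})$. The self-loop case $i=j$ follows analogously from $\hat W_{ii}=w_{ii}\neq 0$, using the pair $u=v=i\in V_i$. For the direction ($\Rightarrow$), assume there exist $u=\ell n+j\in V_j$ and $v=mn+i\in V_i$ with $a_{mn+i,\ell n+j}\neq 0$. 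By the block catalogue, the only way a cross-agent entry (with $i\neq j$) can be nonzero is to sit in block $(0,\tau_d)$, forcing $m=0$ and $\ell=\tau_d$ and giving $\tilde W_{ij}=w_{ij}\neq 0$, hence an edge from $j$ to $i$ in $\mathcal{G}(W)$; when $i=j$, the entry must lie in block $(0,0)$ (the identity-block contributions are reduced to the same $\mathcal{G}_i$) and equals $w_{ii}$, again yielding the corresponding edge in $\mathcal{G}(W)$.

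The only genuine subtlety, and thus the main bookkeeping obstacle, is the index translation between positions $mn+i$ of the augmented state and the agent index $i$, together with separating the three kinds of nonzero entries (the diagonal $\hat W$, the off-diagonal $\tilde W$, and the identity blocks) so that one can conclude that no entry outside block $(0,\tau_d)$ can witness a cross-agent edge. Once that classification is in place, the $\Leftrightarrow$ becomes essentially a reading of the block sparsity pattern of $A$.
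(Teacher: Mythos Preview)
Your approach is essentially the paper's: both read the equivalence directly off the block structure \eqref{afrom}, noting that the only nonzero entries of $A$ linking distinct agents sit in the $(0,\tau_d)$ block $\tilde W$, so $a_{i,\tau_d n+j}=w_{ij}$ witnesses the correspondence. The paper's proof is in fact a single line and does not spell out the $\Rightarrow$ direction at all; your block catalogue fills that in correctly for $i\neq j$.

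One caveat on the self-loop case $i=j$: your claim that in this case ``the entry must lie in block $(0,0)$'' is not right. The identity blocks give $a_{mn+i,(m-1)n+i}=1$ for every $m\geq 1$, and these are edges \emph{inside} $V_i$, hence they do witness a self-loop $(\mathcal{G}_i,\mathcal{G}_i)$ in $\mathbb{G}$ regardless of whether $w_{ii}=0$. So the $\Rightarrow$ direction as literally stated fails at $i=j$ whenever $\tau_d\geq 1$ and $w_{ii}=0$. This is a blemish on the lemma's statement itself (the paper's one-line proof does not address it either) rather than on your strategy, and it is harmless downstream: Lemmas~\ref{lem14} and~\ref{lem4} only use the cross-agent case $i\neq j$, where your argument is complete.
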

\begin{proof}
From the form (\ref{afrom}) of matrix $A$, we have $a_{j,n\tau_{d}+i}=w_{ji}$, $i, j\in[n]$. Moreover, according to the partition method of $\mathcal{G}(A)$, this lemma obviously holds.
\end{proof}
\begin{lemma}\label{lem14}
The number of \textup{CSCCs} \textup{(OSCCs)} in $\mathcal{G}(W)$ is identical to that in $\mathcal{G}(A)$.
\begin{proof}
Since this lemma can be easily obtained from Lemma \ref{lem1}, we omit it here.
%Without loss of generality, consider a \textup{CSCC} with node set $\mathcal{C}:=\{1, \cdots, s\}$ in $\mathcal{G}(W)$. By Lemma \ref{lem1}, the subnetwork $\mathcal{G}_{1}\bigcup\mathcal{G}_{2}\cdots\bigcup\mathcal{G}_{s}$ is a CSCC of $\mathcal{G}(A)$. Similarly, we can prove that every OSCC in $\mathcal{G}(A)$ is uniquely matched with an OSCC in $\mathcal{G}(W)$. Hence, this lemma holds.
\end{proof}
\end{lemma}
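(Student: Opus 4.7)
The plan is to leverage Lemma~\ref{lem1}, which identifies the compressed graph $\mathbb{G}$ with $\mathcal{G}(W)$ as directed graphs under the correspondence $\mathcal{G}_i\leftrightarrow i$. Since the CSCC/OSCC counts depend only on the underlying directed topology, it then suffices to exhibit a topology-preserving bijection between the SCCs of $\mathcal{G}(A)$ and those of $\mathbb{G}$, and to check that this bijection preserves the property of receiving external incoming arcs.

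First I would establish the structural claim that for each SCC $C=\{i_1,\ldots,i_k\}$ of $\mathcal{G}(W)$, the union $V_{i_1}\cup\cdots\cup V_{i_k}$ forms a single SCC of $\mathcal{G}(A)$. The block form (\ref{afrom}) of $A$ induces, within each $V_i$, a directed chain $i\to n+i\to\cdots\to n\tau_d+i$, while the only inter-layer edges are of the form $n\tau_d+j\to i$ with $w_{ij}\neq 0$ and $j\neq i$. Any directed cycle of $\mathcal{G}(W)$ visiting the nodes of $C$ therefore lifts to a directed cycle of $\mathcal{G}(A)$ that traverses every node of every $V_{i_\ell}$: walk the full internal chain of $V_{i_\ell}$, then jump across to $V_{i_{\ell+1}}$, and so on. Conversely, projecting any cycle of $\mathcal{G}(A)$ onto its layer indices yields a closed walk in $\mathbb{G}$, which forces the visited layer indices to lie in a common SCC of $\mathbb{G}$.

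Second, I would transfer the CSCC/OSCC label. For the SCC $U=\bigcup_\ell V_{i_\ell}$ of $\mathcal{G}(A)$ corresponding to $C$, the possible incoming arcs from other SCCs of $\mathcal{G}(A)$ are precisely the inter-layer edges $n\tau_d+j\to i$ with $j\notin C$ and $i\in C$, which via Lemma~\ref{lem1} are in bijection with the edges $j\to i$ of $\mathcal{G}(W)$ crossing into $C$ from outside. Hence $U$ has an external incoming arc in $\mathcal{G}(A)$ if and only if $C$ does in $\mathcal{G}(W)$, so $U$ is a CSCC (respectively OSCC) in $\mathcal{G}(A)$ iff $C$ is a CSCC (respectively OSCC) in $\mathcal{G}(W)$, and the two equalities of counts follow.

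The main obstacle I expect is the structural lifting step, and in particular showing that the entire internal chain of each $V_{i_\ell}$ is absorbed into the corresponding SCC of $\mathcal{G}(A)$: one must explicitly construct a return path from $n\tau_d+i_\ell$ back to $i_\ell$ using a cycle of $\mathcal{G}(W)$ through $i_\ell$ that lives inside $C$. Once this lifting is verified, the CSCC/OSCC correspondence reduces to a direct reading of the inter-layer edge rule, which is exactly the content of Lemma~\ref{lem1}.
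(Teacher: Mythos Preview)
Your lifting construction contains a real gap precisely at the point you flag as the main obstacle. When $C=\{i\}$ is a singleton SCC of $\mathcal{G}(W)$, there is no cycle of $\mathcal{G}(W)$ through $i$ that uses an off-diagonal edge, so no return path from $n\tau_d+i$ to $i$ can be built inside $V_i$: a self-loop $w_{ii}\neq 0$ contributes only the self-loop $a_{ii}$ via the $\hat W$-block, while the $(i,n\tau_d+i)$ entry of $A$ equals $\tilde w_{ii}=0$. Hence $V_i$ splits into $\tau_d+1$ singleton SCCs of $\mathcal{G}(A)$, and your proposed bijection $C\mapsto\bigcup_{i\in C}V_i$ between SCCs breaks down.

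This is not merely a gap in your argument; it shows that the OSCC assertion of the lemma is false as stated. For instance, with $n=2$, $\tau_d=1$ and $W=\bigl(\begin{smallmatrix}1&0\\1/2&1/2\end{smallmatrix}\bigr)$, the graph $\mathcal{G}(W)$ has one CSCC $\{1\}$ and one OSCC $\{2\}$, whereas $\mathcal{G}(A)$ has one CSCC $\{1\}$ and three OSCCs $\{2\},\{3\},\{4\}$. What does survive is the equality of CSCC counts: every ``extra'' singleton SCC produced by splitting a chain $V_i$ receives an incoming edge from its predecessor in the chain and is therefore an OSCC, so no new CSCCs appear. Since only the CSCC equality is actually invoked later (in the proof of Theorem~\ref{the1}), you should restrict your argument to CSCCs and handle the singleton case separately, checking directly that a singleton CSCC $\{i\}$ of $\mathcal{G}(W)$ (which necessarily has $w_{ii}=1$ by row-stochasticity of $|W|$) yields exactly one CSCC, namely $\{i\}$ itself, in $\mathcal{G}(A)$.
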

%\begin{Lem}\label{lem3} (See Lemma 1 in \cite{Al13}) A signed digraph $\mathcal{G}$ has the following two equivalent properties:
%\\
%(1). $\mathcal{G}$ is structurally balanced if and only if all its cycles are positive.
%\\
%(2). $\mathcal{G}$ is structurally unbalanced if and only if there exists at least one negative cycle.
%\end{Lem}
\begin{lemma}\label{lem4}
$\mathcal{G}(W)$ is structurally unbalanced (structurally balanced) if and only if $\mathcal{G}(A)$ is structurally unbalanced  (structurally balanced).
\begin{proof}
Since in subnetwork $\mathcal{G}_{i}$ of $\mathcal{G}(A)$, all edges are positive, $\forall i\in[n]$, by the definition of structural balance, the structural balance of $\mathcal{G}(A)$ is identical to that of $\mathbb{G}$. In addition, we have $$w_{ji}>0(<0)\Leftrightarrow a_{j,n\tau_{d}+i}=w_{ji}>0(<0), i, j\in[n],$$ which means that there exists a positive (negative) edge pointing from node $j$ to node $i$ in $\mathcal{G}(W)$ if and only if there exists a positive (negative) pointing from node in $V_{j}$ to node $V_{i}$. By the definition of structural balance again, the structural balance of $\mathbb{G}$ is identical to that of $\mathcal{G}(W)$. Hence, we conclude that the structural balance of $\mathcal{G}(A)$ is identical to that of $\mathcal{G}(W)$.
\end{proof}
\end{lemma}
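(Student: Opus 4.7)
The plan is to use the compressed graph $\mathbb{G}$ (already appearing in Lemma \ref{lem1}) as an intermediate object and show the chain of equivalences: $\mathcal{G}(W)$ structurally balanced $\Leftrightarrow$ $\mathbb{G}$ structurally balanced $\Leftrightarrow$ $\mathcal{G}(A)$ structurally balanced. The starting observation is that, by inspection of the block form \eqref{afrom}, the only possibly negative entries of $A$ appear inside $\tilde{W}$, while $\hat{W}$ is entrywise nonnegative (since $w_{ii}\ge 0$) and the identity blocks contribute only $+1$'s. Translated into $\mathcal{G}(A)$, this means that every edge internal to a subnetwork $\mathcal{G}_i$ on $V_i=\{i,n+i,\ldots,n\tau_d+i\}$ is nonnegative, and every inter-subnetwork edge is of the form $a_{j,n\tau_d+i}=w_{ji}$, i.e.\ it inherits its sign from a unique corresponding edge in $\mathcal{G}(W)$.

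First I would show that structural balance of $\mathcal{G}(A)$ is equivalent to that of $\mathbb{G}$. For the ``only if'' direction, given a witnessing bipartition $\{\mathcal{U}_1,\mathcal{U}_2\}$ of $\mathcal{G}(A)$, observe that the identity blocks provide a positive directed walk $v_i\to v_{n+i}\to\cdots\to v_{n\tau_d+i}$ inside each $V_i$, so every pair of nodes of $V_i$ is joined by a sequence of positive edges and must therefore lie in the same part. Hence the partition $\{V_1,\ldots,V_n\}$ refines $\{\mathcal{U}_1,\mathcal{U}_2\}$, inducing a bipartition of the node set $\{\mathcal{G}_1,\ldots,\mathcal{G}_n\}$ of $\mathbb{G}$; its validity follows because between $V_j$ and $V_i$ there is at most one signed edge and its sign is the sign of the corresponding compressed edge. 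For the ``if'' direction, lift a bipartition of $\mathbb{G}$ by placing all of $V_i$ into whichever part contains $\mathcal{G}_i$; intra-$V_i$ edges are then positive and same-part, and the inter-subnetwork edges automatically carry the correct sign pattern.

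Next I would handle the equivalence between structural balance of $\mathbb{G}$ and of $\mathcal{G}(W)$. By Lemma \ref{lem1} the edges of $\mathbb{G}$ are in bijection with those of $\mathcal{G}(W)$, and the identity $a_{j,n\tau_d+i}=w_{ji}$ guarantees that corresponding edges share the same sign, so a bipartition of $\{\mathcal{G}_1,\ldots,\mathcal{G}_n\}$ transfers directly to $\{v_1,\ldots,v_n\}$ and vice versa. Combining the two equivalences yields the lemma, and the structurally unbalanced case is its logical contrapositive. The main technical point — and the only place where care is required — is the refinement argument in the first equivalence: one must explicitly observe that each subnetwork $\mathcal{G}_i$ is internally connected by a positive walk through the identity blocks, so that any witnessing bipartition of $\mathcal{G}(A)$ cannot split any $V_i$; once this is established, both directions reduce to checking signs edge by edge.
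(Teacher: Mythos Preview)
Your proposal is correct and follows essentially the same route as the paper's proof: both pass through the compressed graph $\mathbb{G}$ and rely on the two key facts that intra-$V_i$ edges in $\mathcal{G}(A)$ are all positive and that the inter-$V_i$ edges carry exactly the signs of the corresponding edges of $\mathcal{G}(W)$. Your write-up is in fact more complete than the paper's, since you make explicit the refinement argument (the positive walk through the identity blocks forcing each $V_i$ to lie entirely in one part of any witnessing bipartition), which the paper only implicitly invokes when it says ``all edges are positive.''
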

In the sequel, an example is given to illustrate the relationship between $\mathcal{G}(A)$ and $\mathcal{G}(W)$.
\begin{example}
Consider the system (\ref{equ2}) with interaction network $\mathcal{G}(W)$ in Fig. \ref{pic-xiu} (a) and time delay $\tau_d=2$.
%$$ W=\begin{bmatrix}
%0.5 & 0 & 0 & 0& 0.5\\
%0.4 & 0.6 & 0 & 0& 0\\
%0 & -0.3 & 0.2 & 0.5& 0\\
%0 & -0.4 & 0.1 & 0& -0.5\\
%-0.3 & -0.3 & 0 & 0& 0.4
%\end{bmatrix}.$$
From (\ref{afrom}), $\mathcal{G}(A)$ is a multi-layers network with three layers (see Fig. \ref{pic-xiu} (b)). From Fig. \ref{pic-xiu}, it can be observed that there is a CSCC and an OSCC in both $\mathcal{G}(W)$ and $\mathcal{G}(A)$. Moreover, both $\mathcal{G}(W)$ and $\mathcal{G}(A)$ are structurally unbalanced. This confirms the theoretical results presented in Lemmas \ref{lem1}-\ref{lem4}.
 \end{example}
 \begin{figure}
    \centering
    \includegraphics[scale=0.35]{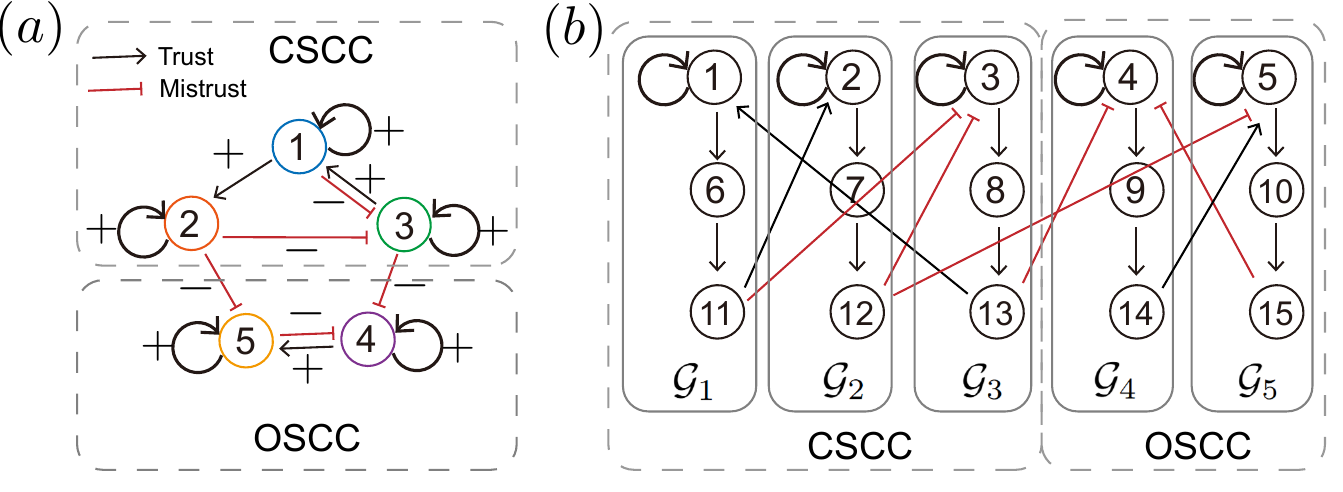}
    \caption{The interaction networks $\mathcal{G}(W)$ and $\mathcal{G}(A)$. Black lines and red lines represent trust and mistrust interactions, respectively.}
    \label{pic-xiu}
 \end{figure}
 \subsubsection{Continuous-time opinion dynamics}\label{sub2}
 The continuous-time Altafini model with delays can be described by
\begin{equation}\label{equ1c}\dot{x}_i(t)=\sum_{j\in\mathcal{N}_{i}} w_{i j}(x_j(t-\tau_c)-\mathrm{sgn}(w_{ij})x_i(t-\tau_c)),\end{equation}
 where $x_i(k)\in[-1,1]$ and $w_{ij}\in[-1,1]$ are defined in Subsection \ref{sub1}. $\tau_c\in\mathbb{R}$ denotes the delays in the continuous-time system (4).

Similarly, by defining $X(t)=[x_1(t),\cdots,x_n(t)]^{\top}$, the system (\ref{equ1c}) can be rewritten as
\begin{equation}\label{equ1c1}\dot{X}(t)=-LX(t-\tau_{c}),\end{equation}
where matrix $L=[l_{ij}]\in\mathbb{R}^{n\times n}$ is defined as
 \begin{align}\label{c3}l_{ij}=\left\{\begin{array}{l}
 w_{ij}, \ \text{if} \ i\neq j,\\
   -\sum\limits_{k, k\neq i}|w_{ik}|, \ \text{otherwise}.\\
   \end{array}\right.\end{align}

 Taking the Laplace transform of both sides of Eq. (\ref{equ1c1}), we have
$$G(s)=(sI+\mathcal{L}(s))^{-1},$$
where $\mathcal{L}(s)=e^{-s\tau_c}L$.
The convergence of system (\ref{equ1c1}) can be determined by checking the roots of the following characteristic equation
\begin{equation}\label{c4}\prod_{k=1}^n\left(z-\alpha_{k}\mathrm{e}^{-z \tau_c}\right)=0,\end{equation}
where the eigenvalues $\alpha_{i}:=\lambda_{i}(-L)$ of matrix $-L$ are ordered according to \begin{equation}\label{equ13}|\text{Re}(\alpha_{1})|\leq|\text{Re}(\alpha_{2})|
\leq\cdots\leq|\operatorname{Re}(\alpha_{n})|.\end{equation}
%\begin{remark}
%According to the definition of $L$ in Eq. (\ref{c3}), consider $W$ defined in subsection \ref{sub1} and $w_{ii}=0$, then $L=I-W$. If $0<\rho(W)<1$ , then $0<\rho(L)<1$.
%\end{remark}
% \subsubsection{Some useful lemmas}

Next, the following basic lemmas are needed to analyze the convergence of the systems (\ref{equ2}) and (\ref{equ1c1}).
% \begin{Lem}\label{lem9} (See Theorem 2 in \cite{meng17}) The Laplacian matrix $L\in\mathbb{R}^{n\times n}$ of a strongly connected signed digraph $\mathcal{G}$ is positive stable, i.e., for $\forall i\in[n]$, $\mathrm{Re}(\lambda_{i}(L))>0,$ if and only if $\mathcal{G}(W)$ is structurally unbalanced, where $L=[l_{ij}]$ is defined in (\ref{c3}).
 %\end{Lem}
 \begin{lemma}\label{lem810} (See Better Theorem in \cite{Horn}) Let $\lambda$ and $x$ be an eigenpair of matrix $A$ such that $\lambda$ satisfies the following inequalities: $$\left|\lambda-a_{i i}\right| \geq R_{i}^{\prime}=\sum\limits_{j \neq i}\left|a_{i j}\right|, \ \forall i=1, \cdots, n.$$ If digraph $\mathcal{G}(A)$ is strongly connected, then every Gersgorin Circle passes through $\lambda$.
 \end{lemma}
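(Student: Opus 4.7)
The plan is to combine the classical eigenvector--maximum-modulus argument behind Gersgorin's disk theorem with a graph propagation step driven by irreducibility of $\mathcal{G}(A)$.

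First, I would rescale the eigenvector $x$ associated with $\lambda$ so that $\max_i |x_i| = 1$, and define $S := \{i : |x_i| = 1\}$, which is nonempty by construction. For any fixed $i \in S$, the $i$-th row of $Ax = \lambda x$ gives $(\lambda - a_{ii})x_i = \sum_{j \neq i} a_{ij} x_j$, and taking absolute values yields
\[
|\lambda - a_{ii}| \;=\; \Bigl|\sum_{j \neq i} a_{ij} x_j\Bigr| \;\leq\; \sum_{j \neq i} |a_{ij}|\,|x_j| \;\leq\; R'_i.
\]
The standing hypothesis $|\lambda - a_{ii}| \geq R'_i$ forces equality throughout this chain, which delivers two consequences simultaneously: $\lambda$ lies exactly on the $i$-th Gersgorin circle, i.e.\ $|\lambda - a_{ii}| = R'_i$; and, from equality in the second inequality, every index $j$ with $a_{ij} \neq 0$ must satisfy $|x_j| = 1$, so $j \in S$.

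The propagation step then runs as follows. Under the paper's convention, $a_{ij} \neq 0$ corresponds to an edge from $v_j$ to $v_i$ in $\mathcal{G}(A)$, so the previous conclusion says that $S$ is closed under moving to in-neighbors. Since strong connectivity of $\mathcal{G}(A)$ is equivalent to strong connectivity of the reverse digraph, starting from any vertex of $S$ one reaches every vertex of $[n]$ along a directed walk in the reverse graph, forcing $S = [n]$. Applying the equality $|\lambda - a_{ii}| = R'_i$ at every $i$ then gives the conclusion.

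The only genuinely delicate point is the bookkeeping of the edge-direction convention; once that is fixed, the argument is self-propelling, and the main work is the single Gersgorin-style inequality chain above. There is no serious analytic obstacle beyond these two observations.
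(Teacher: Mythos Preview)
Your argument is correct and is exactly the standard proof of this result (Taussky's strengthening of Gersgorin's theorem): normalize the eigenvector, show equality in the Gersgorin chain at every index of maximum modulus, use equality in the triangle inequality to propagate that index set along in-neighbors, and invoke strong connectivity to conclude $S=[n]$. The paper does not supply its own proof of this lemma; it simply cites it as the ``Better Theorem'' from Horn and Johnson, so there is nothing further to compare against.
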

\begin{definition}\label{def0}
For any initial conditions, if the limit $\lim\limits_{k\to\infty}X(k)$ exists, the discrete-time system (\ref{equ2}) converges. Furthermore, the system  (\ref{equ2}) achieves bipartite consensus (stability) if $\forall i\in[n]$, $\lim _{k \rightarrow \infty}\left|x_i(k)\right|=\alpha>0 (=0),$ where $\alpha$ is a constant.
\end{definition}
Since the definitions above for the continuous-time system (\ref{equ1c1}) can be similarly defined, we omit it here.
% \begin{lemma}\label{lem10}(See Theorem 3 in \cite{meng17}) Consider a strongly connected signed digraph $\mathcal{G}$, system $\dot{x}(t)=-Lx(t)$ achieves the bipartite consensus if and only if $\mathcal{G}$ is structurally balanced. It achieves the state stability if and only if $\mathcal{G}$ is structurally unbalanced.
% \end{lemma}
% \begin{lemma}\label{lem11}(See Theorem 1 in Pros)
%For F-J model with unsigned network, $\rho(\Xi W^{+})<1$ if and only if there are no oblivious agents.
% \end{lemma}
\section{Main results}\label{sect3}
In this section, some convergence criteria for both discrete-time and continuous-time delayed systems are established. Furthermore, the issue of convergence rate for delayed systems in the presence of complex interaction types will be resolved through rigorous theoretical analyses, revealing the significant impact of interaction type on the convergence rate of delayed systems.
\subsection{Convergence of the discrete-time opinion dynamics}\label{subsec3.1}
Following the augmented system (\ref{equ3}) in Subsection \ref{sub1}, we aim to relax the assumptions  regarding the convergence of the system (\ref{equ2}) presented in some existing literature. Specifically, for the interaction network $\mathcal{G}(W)$, we transition from the condition where every node has one self-loop, to the scenario where some nodes lack self-loops, and finally to the scenario where there are no self-loops at all.
\begin{theorem}\label{the1}
If there is at least one self-loop in each \textup{CSCC} of $\mathcal{G}(W)$, then the discrete-time system (\ref{equ2}) converges.
\begin{proof}
Since $\rho(A)\leq\|A\|_{\infty}=\max\limits_{i\in\{1,\cdots,n(\tau_{d}+1)\}}\sum\limits_{j=1}^{n(\tau_d+1)}|a_{ij}|=1$, $\rho(A)\leq 1$. Without loss of generality, suppose there exist $\alpha$ \textup{CSCCs} and $\beta$ \textup{OSCCs} in $\mathcal{G}(W)$. By Lemmas 1-3, there are $\alpha$ \textup{CSCCs} in $\mathcal{G}(A)$ and each \textup{CSCC} in $\mathcal{G}(A)$ has at least one self-loop.

In the sequel, let matrix $A$ be transformed into the ``canonic'' form as follows:
\begin{equation}\label{equ9}\left[\begin{array}{ccccc}
A_{11} & * & * & \ldots & * \\
 O & A_{22} & * & & * \\
 O &  O & A_{33} & & * \\
\vdots & \vdots& \vdots & \ddots & \vdots \\
 O &  O &  O & \ldots & A_{s s}
\end{array}\right],\end{equation}
where each block matrix $A_{ii}$ is irreducible, and $\text{Eig}(A)$ and $\text{Eig}(A_{ii})$ are the sets of eigenvalues of $A$ and $A_{ii}$, respectively. Moreover, $\text{Eig}(A)=\bigcup_{i=1}^{\alpha} \text{Eig}(A_{ii})$.

When $\mathcal{G}(A_{ii})$ is an \textup{OSCC}, by Lemma \ref{lem810}, $\rho(A_{ii})<1$. When $\mathcal{G}(A_{ii})$ is a \textup{CSCC}, according to the structure balance of $\mathcal{G}(A_{ii})$, the distribution of eigenvalues of $A$ is divided into two cases:
\\
(1). $\mathcal{G}(A_{ii})$ is structurally balanced. Since $A_{ii}$ is irreducible and aperiodic, $A_{ii}$ is primitive. By the definition of primitive matrix, 1 is the unique nonzero eigenvalue of maximum modulus and 1 is an algebraically simple.
\\
(2). $\mathcal{G}(A_{ii})$ is structurally unbalanced. Next, we prove that $\rho(A_{ii})<1$ by contradiction. In this case,  by the property of M-matrix, Laplacian matirx $L=I-A_{ii}$ is positive stable, i.e., $\mathrm{Re}(\lambda_{j}(L))>0$. Suppose $\rho(A_{ii})=1$ and 1 is an eigenvalue of $A$, then $0$ is an eigenvalue of $L$, which contradicts the positive stability for matrix $L$. Then, we need to prove $c+di$ $(d\neq 0)$ satisfying $c^{2}+d^{2}=1$ is not an eigenvalue of $A$. If $c+di$ $(c<1, d\neq 0)$ satisfying $c^{2}+d^{2}=1$ is an eigenvalue of $A$, considering $\exists a_{ii}>0$, we have
\begin{align}
\left|\lambda-a_{i i}\right| & =\sqrt{\left(c-a_{i i}\right)^2+d^2}\nonumber \\
& =\sqrt{1-2 ca_{i i}+a_{i i}^2}\nonumber \\
& >\sqrt{\left(1-a_{i i}\right)^2}\nonumber \\
& =\sum_{j \neq i}\left|a_{i j}\right|, i=1,2, \ldots, n.\label{equ6}
\end{align}
If $c<0$, then $2ca_{i i}\leq 2a_{i i}$; if $c>0$, due to $c^{2}+d^{2}=1$, then $c<1$ and $2ca_{i i}\leq 2a_{i i}$. Therefore, the inequality (\ref{equ6}) holds. By Lemma \ref{lem810}, $c+di$ is not an eigenvalue of $A$, which contradicts our assumption. Thus, $\rho(A_{ii})<1$.

Summarizing the above analyses, we conclude that  system  (\ref{equ2}) converges.
\end{proof}
\end{theorem}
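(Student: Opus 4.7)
The plan is to recast convergence of~(\ref{equ3}) as a spectral condition on $A$ and then analyze the spectrum block-by-block using the structure of $\mathcal{G}(A)$. Since $Y(k)=A^{k}Y(0)$, the limit exists for every initial condition exactly when $\rho(A)\le 1$, every unit-modulus eigenvalue of $A$ equals $1$, and the eigenvalue $1$ is semisimple. The bound $\rho(A)\le 1$ is immediate from $\|A\|_{\infty}=1$, so all of the real work lies in locating the unit-modulus spectrum.

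I would then put $A$ into Frobenius normal form as in~(\ref{equ9}), reducing the question to each irreducible diagonal block $A_{ii}$, which corresponds to an \textup{SCC} of $\mathcal{G}(A)$. Lemma~\ref{lem14} lets me classify these blocks as arising from a \textup{CSCC} or an \textup{OSCC} of $\mathcal{G}(W)$. For an \textup{OSCC} block, at least one row of $|A_{ii}|$ has sum strictly less than one (the missing weight escapes to a later block); combining this strict deficit with $|\lambda-a_{kk}|\ge|\lambda|-a_{kk}\ge R_k'$ on the unit circle and applying Lemma~\ref{lem810} forces ``every Gersgorin circle passes through $\lambda$'', which contradicts the strict row and yields $\rho(A_{ii})<1$.

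The substantive step concerns the \textup{CSCC} blocks. By hypothesis the corresponding \textup{CSCC} of $\mathcal{G}(W)$ carries a self-loop, which by Lemma~\ref{lem1} transfers to a self-loop $a_{kk}>0$ in $A_{ii}$, making the block irreducible and aperiodic. Using Lemma~\ref{lem4} I would split on structural balance. In the balanced case, a signature similarity $D=\operatorname{diag}(\pm 1)$ turns $A_{ii}$ into a primitive row-stochastic matrix, so Perron--Frobenius gives $1$ as a simple eigenvalue and as the unique unit-modulus eigenvalue, and the conclusion transfers back under $D^{-1}=D$. In the unbalanced case I would show $\rho(A_{ii})<1$ directly: the fact that $1\notin\operatorname{Eig}(A_{ii})$ follows from the positive stability of the signed Laplacian $I-A_{ii}$ attached to a strongly connected structurally unbalanced signed graph, while for any other $\lambda=c+di$ with $c^{2}+d^{2}=1$ the identity $|\lambda-a_{kk}|^{2}-(1-a_{kk})^{2}=2a_{kk}(1-c)>0$ shows the Gersgorin circle at the self-loop node is strictly missed, contradicting Lemma~\ref{lem810}.

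The main obstacle I anticipate is precisely this unbalanced sub-case: the strict inequality is available only at the self-loop node, so it must be propagated throughout the irreducible block, which is exactly what the ``every Gersgorin circle passes through $\lambda$'' clause of Lemma~\ref{lem810} delivers. This step also pinpoints why a single self-loop per \textup{CSCC} already suffices, and makes clear that dropping the self-loop hypothesis would require a genuinely different mechanism (presumably the content of the subsequent theorems) to preclude unit-modulus eigenvalues other than $1$.
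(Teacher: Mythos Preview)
Your proposal is correct and follows essentially the same route as the paper: Frobenius normal form of $A$, Lemma~\ref{lem810} to dispatch the \textup{OSCC} blocks, and a structural-balance split on \textup{CSCC} blocks, with the unbalanced case handled by the identical Gersgorin computation $|\lambda-a_{kk}|^{2}-(1-a_{kk})^{2}=2a_{kk}(1-c)>0$ at the self-loop node. Your treatment is in fact slightly tighter than the paper's in two places---you make explicit the signature similarity needed to call the balanced block genuinely primitive, and you state the semisimplicity requirement on the eigenvalue~$1$---but the architecture is the same.
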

\begin{corollary}\label{cor2}
If $\mathcal{G}(W)$ is strongly connected and there is at least one self-loop in $\mathcal{G}(W)$, then system (\ref{equ2}) converges. Specifically, when $\mathcal{G}(W)$ is structurally balanced (structurally unbalanced),  system (\ref{equ2}) achieves bipartite consensus (stability).
%\begin{Pro}
%By Lemmas \ref{lem1}-\ref{lem4}, if $\mathcal{G}(W)$ is structurally balanced and strongly connected, then $\mathcal{G}(A)$ is structurally balanced and strongly connected. According to the proof of Theorem \ref{the1}, due to the existence of self-loop in $\mathcal{G}(W)$, there exists at least a self-loop in $\mathcal{G}(A)$. By some similar analysis as Theorem 1, this theorem can be naturally derived.
%\end{Pro}
\end{corollary}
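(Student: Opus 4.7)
The plan is to deduce this corollary almost directly from Theorem \ref{the1} and Lemma \ref{lem4}, with one extra gauge-transformation step to identify the limit in the balanced case. First, since $\mathcal{G}(W)$ is strongly connected the whole graph is its unique SCC, and with no other SCCs present this SCC is automatically a CSCC; the self-loop hypothesis then places a self-loop inside the unique CSCC of $\mathcal{G}(W)$, so Theorem \ref{the1} directly yields the convergence of (\ref{equ2}).

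To distinguish the two limit behaviours, I would invoke Lemmas \ref{lem14} and \ref{lem4}: $\mathcal{G}(A)$ also has exactly one CSCC and no OSCC, and its structural balance matches that of $\mathcal{G}(W)$. The canonical form (\ref{equ9}) of $A$ therefore reduces to a single irreducible diagonal block $A_{11}$ with $\rho(A)=\rho(A_{11})$. If $\mathcal{G}(W)$ is structurally unbalanced, Case (2) in the proof of Theorem \ref{the1} gives $\rho(A_{11})<1$, so $Y(k)=A^{k}Y(0)\to 0$ and hence $x_i(k)\to 0$ for every $i\in[n]$, which is stability.

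If $\mathcal{G}(W)$ is structurally balanced with bipartition $\{\mathcal{V}_1,\mathcal{V}_2\}$, Case (1) in the proof of Theorem \ref{the1} shows that $A_{11}$ is primitive (the self-loop $w_{ii}>0$ lifts to $a_{ii}>0$ in $\mathcal{G}(A)$, ensuring aperiodicity), so $1$ is a simple eigenvalue of $A$ and is the unique eigenvalue of maximum modulus. I would then introduce the gauge $D=\operatorname{diag}(\sigma_1,\ldots,\sigma_n)$ with $\sigma_i=\pm 1$ according to whether $v_i\in\mathcal{V}_1$ or $v_i\in\mathcal{V}_2$, lift it to $\tilde{D}=I_{\tau_d+1}\otimes D$, and verify from (\ref{afrom}) that $\tilde{D}A\tilde{D}=|A|$ is a non-negative row-stochastic matrix, using $D\hat{W}D=\hat{W}$, $D\tilde{W}D=|\tilde{W}|$ (by structural balance), and $DID=I$. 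Since $|A|\mathbf{1}=\mathbf{1}$, the vector $\tilde{D}\mathbf{1}$ is the Perron eigenvector of $A$ at eigenvalue $1$, so $Y(k)\to c\,\tilde{D}\mathbf{1}$ for a scalar $c$ depending on $Y(0)$. Translating back, $x_i(k)\to c\sigma_i$ and hence $|x_i(k)|\to|c|=:\alpha$, which is bipartite consensus whenever $c\neq 0$.

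The main obstacle is the gauge computation itself: one must verify carefully that conjugating the block matrix (\ref{afrom}) by the lifted gauge $\tilde{D}$ really produces $|A|$, which requires the structural balance of $\mathcal{G}(W)$ to absorb the signs of both $\hat{W}$ and $\tilde{W}$ simultaneously while leaving the delay-chain identity blocks unchanged. Beyond this, the argument is a direct bookkeeping exercise using the tools already assembled in Theorem \ref{the1} and Lemma \ref{lem4}.
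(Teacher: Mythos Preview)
Your proposal is correct and follows exactly the route the paper intends: the corollary is stated without proof immediately after Theorem~\ref{the1}, so the paper is implicitly relying on the same reduction you spell out---strong connectivity makes $\mathcal{G}(W)$ its own unique CSCC, Theorem~\ref{the1} gives convergence, and Lemmas~\ref{lem14}--\ref{lem4} together with Cases~(1) and~(2) of the proof of Theorem~\ref{the1} separate the balanced and unbalanced limits. Your additional gauge step $\tilde{D}=I_{\tau_d+1}\otimes D$ to identify the Perron eigenvector and obtain $x_i(k)\to c\sigma_i$ is the standard (and correct) way to make the bipartite-consensus claim explicit; the paper omits this detail but it is precisely what is needed, and your caveat that $c\neq 0$ only for generic initial data is in fact more careful than the paper's own statement.
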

\begin{figure*}[ht]
    \centering
    \includegraphics[scale=0.53]{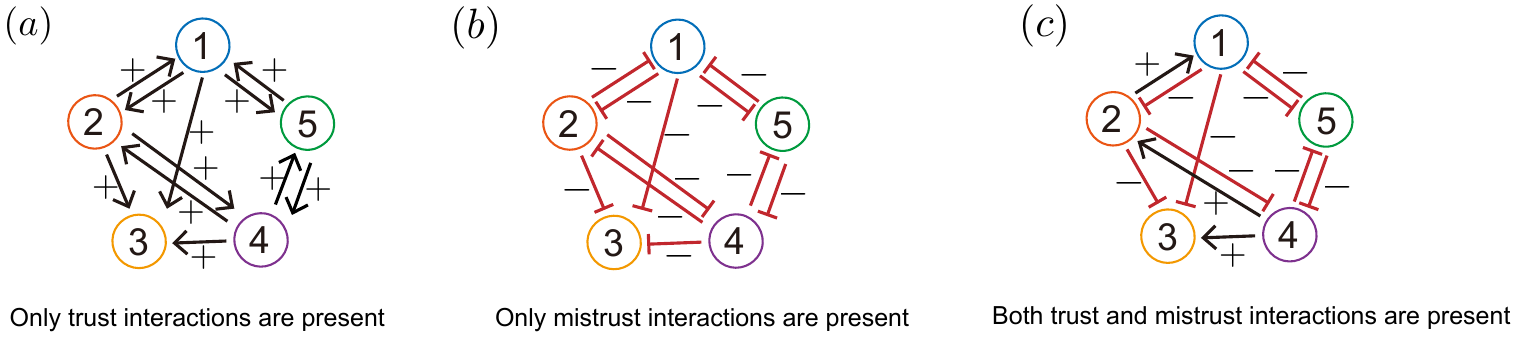}
    \caption{Three interaction networks without self-loops: $\mathcal{G}(W_{t})$, $\mathcal{G}(W_{m})$, and $\mathcal{G}(W_{tm})$. Black lines and red lines represent trust and mistrust interactions, respectively. In $(a)$, only trust interactions are present, while in $(b)$, only mistrust interactions are present. In $(c)$, both trust and mistrust interactions coexist.}
    \label{pic-21}
 \end{figure*}
In Theorem \ref{the1} and Corollary \ref{cor2}, we do not require the assumption that each node has a self-loop. Thus, Theorem \ref{the1} can be seen as an extension of existing research results on both delayed and delay-free systems. In fact, the presence of self-loops in the signed interaction network $\mathcal{G}(W)$ is not a necessary condition for the convergence of discrete-time systems (see Example \ref{exa2}).
 \begin{example}\label{exa2}(Convergence of the discrete-time system (\ref{equ2}) without self-loop)
Consider the system (\ref{equ2}) associated with three interaction networks $\mathcal{G}(W_{t})$, $\mathcal{G}(W_{m})$, and $\mathcal{G}(W_{tm})$, where matrices $W_{tm}$ is given as follows
$$W_{tm}=\begin{bmatrix}\begin{smallmatrix}
0 & 0.3& 0 & 0 & -0.7\\
-0.5 &0& 0 & 0.5 &0\\
-0.5 & -0.3 & 0 &0.2& 0\\
0 & -0.5& 0 & 0 & -0.5\\
-0.5 & 0& 0 & -0.5 & 0\\
\end{smallmatrix}\end{bmatrix}.$$
Moreover, $W_{t}=|W_{tm}|$ and  $W_{m}=-|W_{tm}|$.
In Fig. \ref{pic-21} $(a)$ and $(b)$, only trust or mistrust interactions are present, while in Fig. \ref{pic-21} $(c)$, trust and mistrust interactions coexist. With the initial opinion $X(0)=[-\frac{1}{2}, -\frac{1}{4}, 0, \frac{1}{3}, \frac{1}{2}]^{\top}$, we perform simulations and depict the test results in Fig. \ref{pic-2}. As shown in Fig. \ref{pic-2}, the discrete-time system (\ref{equ2}) with only trust or mistrust interactions can not converge, whereas the discrete-time system (\ref{equ2}) with coexistence of trust and mistrust interactions converges. It suggests that the self-loop could not be necessary for the convergence of system (\ref{equ2}) when both trust and mistrust interactions are present.
\begin{figure*}[ht]
    \centering
    \includegraphics[scale=0.55]{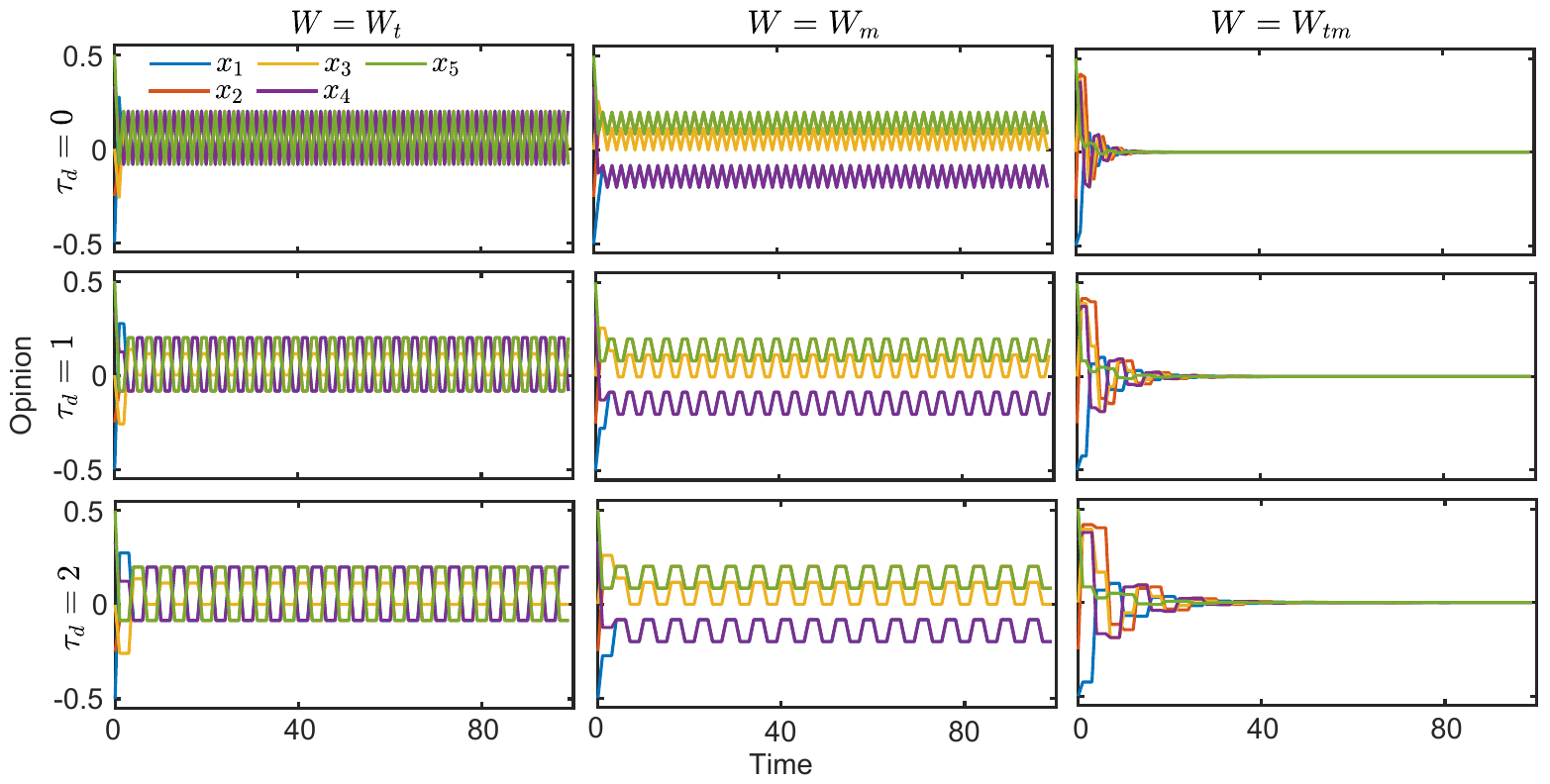}
    \caption{Convergence performances with evolution along the time axis for the discrete-time system (\ref{equ2})
associated with three interaction networks $W=W_{t}$, $W=W_{m}$, $W=W_{tm}$ in Fig. \ref{pic-21}, respectively.}
    \label{pic-2}
 \end{figure*}
\end{example}
Example \ref{exa2} demonstrates that the discrete-time system (\ref{equ2}) with the coexistence of trust and mistrust interactions can converge even if $w_{ii}=0$, i.e., the interaction topology is free of self-loops. This observation naturally leads to the question of whether there exists a class of interaction topologies without self-loops that can still ensure the convergence of system (\ref{equ2}). Subsequently, we address this question by constructing the following signed networks.
\\
\noindent
\textbf{Case 1 (Random mixed interactions):}  For random mixture interactions of various interaction types, we construct the interaction network with $n$ individuals in the following way:
\\
i) individual $j$ initiates an interaction to $i$ with probability $P>0$, i.e., the edge pointing from node $j$ to node $i$ exists with probability $P$ in interaction networks.
\\
ii) if individual $j$ has effect on individual $i$, then we draw the signed interaction strength $s_{ij}$ as random variable $Z$ with expectation 0 and variance $\sigma^2$.
\\
\noindent
 \textbf{Case 2 (Complex mixed interactions):} For the mixture interactions under a certain proportion of five typical interaction types, we construct the interaction network with $n$ individuals in the following way:
 \\
 i) individuals $i$ and $j$ interact with probability $P>0$;
 \\
 ii) the interaction strengths are categorized into five typical scenarios:
 \\
 (1) Mutual trust $(+/+)$ interaction with proportion $P_{+/+}$. The interaction strengths $s_{ij}$ and $s_{ji}$ take the values of $|Z|$  independently.
 \\
 (2) Mutual mistrust $(-/-)$ interaction  with proportion $P_{-/-}$. The interaction strengths $s_{ij}$ and $s_{ji}$ take the values of $-|Z|$  independently.
 \\
 (3) Trust$/$mistrust $(+/-)$ interaction with proportion $P_{+/-}$. The interaction strengths $s_{ij}$ and $s_{ji}$ have opposite signs: one takes the value of $|Z|$ while the other takes the value of  $-|Z|$.
 \\
 (4) Unilateral trust $(+/0)$ interaction with proportion $P_{+/0}$. One of the interaction strengths $s_{ij}$ and $s_{ji}$ takes the value of $|Z|$ while the other takes the value of 0.
 \\
 (5) Unilateral mistrust $(-/0)$ interaction with proportion $P_{-/0}$. One of the interaction strengths $s_{ij}$ and $s_{ji}$ takes the value of $-|Z|$ while the other takes the value of 0.

For these two scenarios, $s_{i j}<0$ ($s_{i j}>0$) represents the mistrust/trust of individual $j$ for individual $i$, and $s_{i j}=0$ denotes that the interaction strength from individual $j$ to individual $i$ is zero. Moreover, we assume that there are no self-loops, and for clarity, we refer to the networks described in Case 1 and Case 2 as the random mixture network and the complex mixture network, respectively

For the random mixture scenario, we have some statistics of the interaction matrix $S=[s_{ij}]$: $$\left\{\begin{array}{l}
\mathbb{E}\left(s_{ij}\right)=P\mathbb{E}\left(Z\right)=0, \\
\mathbb{E}\left(|s_{i j}|\right)=P\mathbb{E}\left(|Z|\right),\\
\mathbb{E}\left(s_{ij}^{2}\right)=P\mathbb{E}\left(Z^{2}\right)=P\sigma^2, \\
\operatorname{Var}\left(s_{i j}\right)=\mathbb{E}\left(s_{ij}^{2}\right)-\mathbb{E}^{2}\left(s_{ij}\right)=P\sigma^2.
\end{array}\right.$$
For large $n$, since $|s_{ij}|$ are i.i.d. from the distribution of $|Z|$, the $i$th row sum $|S|_{i}$ of matrix $|S|$ approaches a constant:
\begin{align}\sum\limits_{j=1}^n |s_{i j}| &\approx(n-1) \mathbb{E}\left(|s_{i j}|\right)\nonumber\\&=(n-1)P\mathbb{E}(|Z|).\label{equ_30}\end{align}

Similarly, for the complex mixture of diverse interaction type, we have some statistics of the interaction matrix $S$
 \begin{equation*}\left\{\begin{array}{l}
\mathbb{E}\left(s_{ij}\right)=P\bar{P}\mathbb{E}\left(|Z|\right), \\
\mathbb{E}\left(|s_{i j}|\right)=P\hat{P}\mathbb{E}\left(|Z|\right),\\
\mathbb{E}\left(s_{ij}^{2}\right)=\displaystyle P\hat{P}\mathbb{E}\left(|Z|^{2}\right)=P\hat{P}\sigma^2, \\
\mathbb{E}\left(s_{i j}s_{j i}\right)=PP^{*}\mathbb{E}^{2}\left(|Z|\right),
\end{array}\right. \end{equation*}%\label{equ_28}\label{equ_29}
where \begin{equation}\label{equ_28}\hat{P}=P_{+/+}+P_{+/-}+P_{-/-}+\frac{1}{2}P_{+/0}+\frac{1}{2}P_{-/0},\end{equation} \begin{equation}\label{equ_29}\bar{P}=P_{+/+}-P_{-/-}+\frac{1}{2}P_{+/0}-\frac{1}{2}P_{-/0},\end{equation}
and \begin{equation}\label{equ_299}P^{*}=P_{+/+}+P_{-/-}-P_{+/-},\end{equation}
where $0\leq\hat{P}\leq1$, $-1\leq\bar{P}\leq1$ and $\bar{P}\leq\hat{P}$.
For large $n$, the $i$th row sum $|S|_{i}$ of matrix $|S|$ approaches to a constant
\begin{align}\sum\limits_{j=1}^n |s_{i j}|&\approx(n-1) \mathbb{E}\left(|s_{i j}|\right)\nonumber\\ &=(n-1)P\hat{P}\mathbb{E}(|Z|).\label{equ_31}\end{align}
\indent
For both random mixture and complex interaction scenarios, we conclude that system (\ref{equ2}) can be rewritten as
\begin{equation}\label{equ11}X(k+1)=WX(k-\tau_{d}),\end{equation} where $w_{i j}=\displaystyle\frac{s_{ij}}{|S|_{i}}$, if $i\neq j$; $w_{i j}=0$, otherwise. In order to clarify the $i$th row sum $|S|_{i}$ for random mixture interactions and that for complex mixed interactions, we denote them as $C_{\text{r}}$ and $C_{\text{m}}$, respectively.

In what follows, several useful lemmas are presented to establish a solid foundation for our subsequent analysis.
\begin{lemma}\label{lem31} (See Theorem 1.10 (Circular law) in \cite{Tao:10}) Let matrix $W$ be the $n\times n$ random matrix whose entries $w_{ij}$ are i.i.d. random variables with mean zero and variance $\frac{1}{n}$. Then, when $n$ goes to infinity, the spectral distribution of $W$ converges (both in probability and the almost sure sense) to the uniform distribution on the unit disk. \end{lemma}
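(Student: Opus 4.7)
The plan is to follow Girko's Hermitization strategy, which is the standard route to the Circular Law. Since the non-Hermitian spectrum of $W$ cannot be attacked directly through the resolvent, the idea is to encode the empirical spectral measure $\mu_W$ of $W$ through its logarithmic potential
$$U_{\mu_W}(z) := -\int \log|w-z|\,d\mu_W(w) = -\frac{1}{n}\log|\det(W-zI)| = -\frac{1}{2n}\log\det H_n(z),$$
where $H_n(z) = (W-zI)(W-zI)^{*}$ is a Hermitian matrix whose eigenvalues are the squared singular values of $W - zI$. Because the logarithmic potential determines a measure uniquely, it suffices to show, for a.e.\ $z\in\mathbb{C}$, that $U_{\mu_W}(z)$ converges to the logarithmic potential of the uniform measure on the unit disk.

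First I would study, for each fixed $z$, the empirical singular value distribution $\nu_z^{(n)}$ of $W - zI$. This is an ordinary Hermitian random-matrix problem: writing down the resolvent of $H_n(z)$, applying concentration of the Stieltjes transform around its expectation, and deriving a self-consistent equation in the Marchenko--Pastur style, one obtains (in probability) convergence of $\nu_z^{(n)}$ to an explicit deterministic measure $\nu_z$ whose support and density are computable in closed form. Substituting into the Hermitization identity gives a candidate limit $U(z) = -\tfrac{1}{2}\int\log x\,d\nu_z(x)$, which a direct calculation identifies with $-\log|z|$ for $|z|\ge 1$ and $-\tfrac12(|z|^2-1) - \log 1 = \tfrac{1}{2}(1-|z|^2)$-type expression for $|z|\le 1$, i.e.\ precisely the logarithmic potential of the uniform measure on the unit disk.

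The hard part, and the reason the circular law resisted a full proof for decades, is the singularity of $\log x$ at $x=0$: weak convergence of $\nu_z^{(n)}\to\nu_z$ does \emph{not} by itself guarantee $\int \log x\,d\nu_z^{(n)}(x) \to \int\log x\,d\nu_z(x)$, since a single abnormally small singular value of $W - zI$ could drag the integral to $-\infty$. The decisive ingredient is therefore a quantitative lower tail bound on the smallest singular value, of the form $s_n(W - zI) \ge n^{-C}$ with overwhelming probability, in the spirit of Rudelson--Vershynin and Tao--Vu. Combined with a moderate-deviation control on the bulk of intermediate singular values, this yields the uniform integrability of $\log x$ against the sequence $\nu_z^{(n)}$ and closes the logarithmic-potential argument.

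Finally, to upgrade convergence in probability to almost sure convergence of the empirical spectral distribution, I would use a concentration inequality (for example McDiarmid applied to the Stieltjes transform, exploiting that each entry $w_{ij}$ has influence $O(n^{-1/2})$) together with a Borel--Cantelli argument along a countable dense set of test points $z$, and conclude by the Portmanteau theorem applied to the logarithmic potentials.
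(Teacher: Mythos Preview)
Your outline is a faithful sketch of the Girko--Hermitization route to the Circular Law, and the ingredients you list (logarithmic potential reduction, convergence of the singular-value law of $W-zI$, a polynomial lower bound on the least singular value to handle the $\log$-singularity, and a concentration argument for the almost-sure upgrade) are exactly the ones assembled in the Tao--Vu proof that the paper cites. There is nothing mathematically wrong with the plan.

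That said, the paper itself contains \emph{no} proof of this lemma: it is stated purely as a citation (``See Theorem~1.10 (Circular law) in~\cite{Tao:10}'') and used as a black box in the proof of Theorem~\ref{the3}. So there is no ``paper's own proof'' to compare against; your proposal is effectively a summary of the argument in the cited reference rather than a competing approach. For the purposes of this paper, the intended ``proof'' is simply the sentence: this is Theorem~1.10 of Tao--Vu, applied verbatim.
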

\begin{lemma}(See \cite{All:12})\label{lem41} Let matrix $W$ be the $n\times n$ random matrix whose entries $w_{ij}$ are random variables with mean zero and variance $\frac{1}{n}$. The asymmetric entries of random matrix $W$ are i.i.d, and symmetric entries obey mean $\frac{z}{n}$. Then, when $n \rightarrow \infty$, the spectral distribution of $W$ converges to the uniform distribution on the complex plane centered at the origin, whose horizontal half-axis length is $1+z$ and the vertical half-axis length is $1-z$, i.e.,
$${\left(\frac{x}{1+z}\right)}^{2}+{\left(\frac{y}{1-z}\right)}^{2}\leq 1.$$
 \end{lemma}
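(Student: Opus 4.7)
The plan is to establish this elliptic-law statement via Girko's Hermitization technique, reducing the two-dimensional spectral problem for the non-Hermitian matrix $W$ to a family of one-dimensional singular-value problems parameterized by $\zeta\in\mathbb{C}$. The key structural input is the covariance profile $\mathbb{E}[w_{ij}^{2}]=1/n$ and $\mathbb{E}[w_{ij}w_{ji}]=z/n$ (for $i\neq j$), which interpolates between the Wigner regime ($z=1$), the circular-law regime ($z=0$, covered by Lemma \ref{lem31}), and the antisymmetric regime ($z=-1$).

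First, I would decompose $W=\sqrt{(1+z)/2}\,\widetilde{S}+\sqrt{(1-z)/2}\,\widetilde{A}$, where $\widetilde{S}$ is a standard symmetric (Wigner) random matrix with entries of variance $1/n$ and $\widetilde{A}$ is a standard antisymmetric random matrix with upper-triangular entries of variance $1/n$; a direct second-moment check reproduces the prescribed covariance profile, and in the Gaussian case $\widetilde{S}$ and $\widetilde{A}$ are independent. Next, for each $\zeta\in\mathbb{C}$, I would Hermitize by considering the $2n\times 2n$ block matrix
$$H_{\zeta}=\begin{bmatrix}0 & W-\zeta I\\ (W-\zeta I)^{*} & 0\end{bmatrix},$$
whose positive eigenvalues equal the singular values of $W-\zeta I$. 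By Girko's formula, the empirical spectral measure $\mu_{n}$ of $W$ admits, for any smooth compactly supported $f$, the representation $\int f\,d\mu_{n}=\frac{1}{2\pi}\int \Delta f(\zeta)\,U_{n}(\zeta)\,d\zeta$ with $U_{n}(\zeta)=\frac{1}{n}\log|\det(W-\zeta I)|$.

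The heart of the argument is identifying the limit of $U_{n}(\zeta)$. Using the resolvent of $H_{\zeta}$ together with the variance profile above, I would derive a self-consistent quadratic equation for the Stieltjes transform $m(\eta;\zeta)$ of the limiting singular-value distribution; integration in $\eta$ yields an explicit limit $U(\zeta)$ whose form reflects the anisotropy encoded by $z$. A direct computation of the distributional Laplacian $\Delta U(\zeta)$ then produces the uniform density on the interior of the ellipse $\{(x,y):(x/(1+z))^{2}+(y/(1-z))^{2}\leq 1\}$, which is precisely the claimed limit.

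The main obstacle I anticipate is the control of the smallest singular value of $W-\zeta I$ near the boundary of the ellipse: the logarithm has an integrable singularity at zero, so upgrading pointwise Stieltjes-transform convergence into convergence of $U_{n}$ in the sense of distributions requires quantitative lower-tail estimates standard in the non-Hermitian random matrix literature. Once this estimate is in place the remainder is routine, and a standard moment-matching argument removes any Gaussian assumption on the entries, so that the limiting measure depends on the entry distribution only through the first two moments recorded above.
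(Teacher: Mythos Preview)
The paper does not prove this lemma at all: it is quoted verbatim as a known result from \cite{All:12} (Allesina--Tang), with no argument beyond the citation. So there is nothing in the paper to compare your proposal against at the level of technique.

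That said, your proposal is a sound sketch of how the elliptic law is actually established in the random-matrix literature. The decomposition $W=\sqrt{(1+z)/2}\,\widetilde S+\sqrt{(1-z)/2}\,\widetilde A$ correctly reproduces the covariance profile $\mathbb{E}[w_{ij}^2]=1/n$, $\mathbb{E}[w_{ij}w_{ji}]=z/n$; the Hermitization of $W-\zeta I$ and Girko's logarithmic-potential formula are the standard route; and you correctly flag the least-singular-value bound as the only nontrivial analytic obstacle. In short, you are supplying a genuine proof outline where the paper is content to invoke the result as a black box. For the purposes of this paper that level of detail is unnecessary---the authors use Lemma~\ref{lem41} purely as an input to locate the spectrum of their interaction matrix---so a citation (ideally to a rigorous source for the elliptic law rather than to \cite{All:12}, which itself only cites it) is all that is required here.
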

In this paper, we consider the general case where interaction network contains both trust and mistrust relationships between agents, namely, $P_{+/+}+P_{+/0}<1$ and $P_{-/-}+P_{-/0}<1$. Moreover, it should be pointed out that the convergence in this paper is considered for large system size $n$.
\begin{theorem}\label{the3}
The discrete-time system (\ref{equ2}) with random or complex mixed interactions converges to zero.
\begin{proof}
We prove this theorem in the following three steps.
\\
\textbf{Step 1:}  In this step, we focus on the eigenvalues distribution of $W$ in the random mixture case.

By the definition of matrix $W$ in Eq. (\ref{equ11}), we have
$$\left\{\begin{array}{l}
\mathbb{E}\left(w_{ij}\right)=\displaystyle\frac{\mathbb{E}\left(s_{ij}\right)}{C_{\text{r}}}=0, \\
\mathbb{E}\left(w_{ij}^{2}\right)=\displaystyle\frac{\mathbb{E}\left(s_{ij}^{2}\right)}{C_{\text{r}}^2}=\displaystyle\frac{P\sigma^2}{C_{\text{r}}^2}, \\
\operatorname{Var}\left(w_{i j}\right)=\mathbb{E}\left(w_{ij}^{2}\right)-\mathbb{E}^{2}\left(w_{ij}\right)=\mathbb{E}\left(w_{ij}^{2}\right).%=\frac{\sigma^2}{P(n-1)^2\mathbb{E}^{2}(|Z|)}
\end{array}\right.$$
Let $R=[r_{ij}]=\displaystyle\frac{W}{\sqrt{n\operatorname{Var}\left(w_{i j}\right)}}$, then we have
$$\left\{\begin{array}{l}
\mathbb{E}\left(r_{ij}\right)=0, \\
\mathbb{E}\left(r_{ij}^{2}\right)=\displaystyle\frac{1}{n}, \\
\operatorname{Var}\left(r_{i j}\right)=\mathbb{E}\left(r_{ij}^{2}\right)-\mathbb{E}^{2}\left(r_{ij}\right)=\displaystyle\frac{1}{n}.
\end{array}\right.$$
 According to Lemma \ref{lem31}, the eigenvalues of $R$ are uniformly distributed on a unit circle centered at $(0,0)$, as $n \rightarrow \infty$. It follows that when $n$ is sufficiently large, the eigenvalue distribution of $W$ is uniform distributed on a circle of radius approximately \begin{align}\rho(W)&=\sqrt{n\operatorname{Var}\left(w_{i j}\right)}=\displaystyle\frac{\sqrt{nP\sigma^2}}{C_{\text{r}}}\nonumber\\&=\frac{\sqrt{nP\sigma^2}}{(n-1)P\mathbb{E}(|Z|)}.\label{equ_3}\end{align}
Thus, for large $n$, we have $\rho(W)<1$.
\\
\textbf{Step 2:} In this step, we study the eigenvalues distribution of $W$ in complex mixture case.

Just as for random mixture interactions, we first consider the eigenvalue distribution of matrix $W$. Then, we have
 \begin{equation}\label{equ_33}\left\{\begin{array}{l}
\mathbb{E}:=\mathbb{E}\left(w_{ij}\right)=\frac{\mathbb{E}\left(s_{ij}\right)}{C_{\text{m}}}=\frac{P\bar{P}\mathbb{E}\left(|Z|\right)}{C_{\text{m}}},\\%=\displaystyle\frac{\bar{P}}{(n-1)\hat{P}}, \\
\mathbb{E}\left(w_{ij}^{2}\right)=\frac{\mathbb{E}\left(s_{ij}^{2}\right)}{C_{\text{m}}^2}=\frac{P\hat{P}\sigma^2}{C_{\text{m}}^2},\\%=\displaystyle
\mathbb{V}:=\operatorname{Var}\left(w_{i j}\right)=\mathbb{E}\left(w_{ij}^{2}\right)-\mathbb{E}^2,\\
\mathbb{T}:=\mathbb{E}\left(w_{ij}w_{ji}\right)=\frac{PP^{*}\mathbb{E}^{2}\left(|Z|\right)}{C_{\text{m}}^2},%=\displaystyle \frac{P_{+/+}+P_{mm}-P_{tm}}{P\hat{P}^{2}(n-1)^{2}}.
\end{array}\right. \end{equation}
where $\hat{P}$, $\bar{P}$ and $P^{*}$ are defined in (\ref{equ_28}), (\ref{equ_29}) and (\ref{equ_299}), respectively.
\\
\indent
Let $N=[n_{ij}]=\bar{W}-\mathbb{E} \cdot \textbf{1}\cdot \textbf{1}^{\top}+\mathbb{E}\cdot I$, then we can obtain some statistics of matrix $N$. Specifically,
$$\left\{\begin{array}{l}
\mathbb{E}\left(n_{ij}\right)=0,\\
\mathbb{E}\left(n_{ij}^{2}\right)=\mathbb{V}=\operatorname{Var}\left(n_{i j}\right),\\
\mathbb{E}\left(n_{ij}n_{ji}\right)=\mathbb{E}\left((w_{ij}-\mathbb{E} )(w_{ji}-\mathbb{E} )\right)=\mathbb{T}-\mathbb{E}^2.
\end{array}\right.$$
\indent
Furthermore, let $V=[v_{ij}]=\displaystyle\frac{N}{\sqrt{n\mathbb{V}}}$, then
\[\left\{\begin{array}{l}
\mathbb{E}\left(v_{ij}\right)=0, \\
\mathbb{E}\left(v_{ij}^{2}\right)=\displaystyle\frac{1}{n}, \\
\mathbb{E}\left(v_{ij}v_{ji}\right)=\displaystyle\frac{\zeta}{n}, \\
\end{array}\right.
\]
where $\zeta=\frac{(\mathbb{T}-\mathbb{E}^2)}{\mathbb{V}}\in\mathbb{R}$. According to Lemma \ref{lem41}, when $n$ is sufficiently large, the eigenvalues of $F$ are uniformly distributed in an ellipse centered at (0, 0)
\[
{\left(\frac{x}{1+\zeta}\right)}^{2}+{\left(\frac{y}{1-\zeta}\right)}^{2}\leq 1.
\] It follows that the eigenvalues of $N$ are uniformly distributed in an ellipse centered at (0, 0) and
\[
{\left(\frac{x}{\sqrt{n\mathbb{V}}(1+\zeta)}\right)}^{2}+{\left(\frac{y}{\sqrt{n\mathbb{V}}(1-\zeta)}\right)}^{2}\leq 1.
\]
%$${\left(\frac{x}{\left(1+\tau\right)\sqrt{n\operatorname{Var}\left(N_{i j}\right)}}\right)}^{2}+\left({\frac{x}{\left(1-\tau\right)\sqrt{n\operatorname{Var}\left(N_{i j}\right)}}\right)}^{2}\leq 1$$
\indent
Note that $\mathbb{E} \cdot \textbf{1}\cdot \textbf{1}^{\top}$ is a rank-one perturbation matrix with $n-1$ zero eigenvalues and $n\mathbb{E}$ is a single eigenvalue. According to the low-rank perturbation theorem, when $|n\mathbb{E}|\leq \sqrt{n\mathbb{V}}$, all eigenvalues of $N+\mathbb{E} \cdot \textbf{1}\cdot \textbf{1}^{\top}$  are still uniformly distributed in the ellipse above. Finally, the effect of $\mathbb{E}\cdot I$ is to shift the whole distribution by subtracting
$\mathbb{E}$ from each eigenvalue. When $|n\mathbb{E}|>\sqrt{n\mathbb{V}},$ $n-1$ eigenvalues of $N+\mathbb{E} \cdot \textbf{1}\cdot \textbf{1}^{\top}$ are still uniformly distributed in the ellipse above, whereas an eigenvalue $\hat\lambda$ is modified as
\begin{align*}\hat\lambda&=n\mathbb{E}\left(w_{ij}\right)+\frac{\mathbb{E}\left(n_{ij}n_{ji}\right)}{\mathbb{E}\left(w_{ij}\right)}
=n\mathbb{E}+\frac{\mathbb{T}-\mathbb{E}^2}{\mathbb{E}}.\end{align*}%\\&=\frac{(n-1)P\bar{P}^{2}+P_{+/+}+P_{mm}-P_{tm}}{(n-1)P\bar{P}\hat{P}}
Therefore, for sufficiently large $n$, we obtain the eigenvalue distribution of $W$.

%\approx\frac{\bar{P}}{\hat{P}}
When $|n\mathbb{E}|\leq \sqrt{n\mathbb{V}}$, the eigenvalues of $W$  are uniformly distributed in the ellipse
\begin{equation}\label{equ42}{\left(\frac{x+\mathbb{E}}{a_{N}}\right)}^{2}+{\left(\frac{y}{b_{N}}\right)}^{2}\leq 1,\end{equation}
where $a_{N}=\sqrt{n\mathbb{V}}(1+\zeta)$ and $b_{N}=\sqrt{n\mathbb{V}}(1-\zeta)$.

When $|n\mathbb{E}|>\sqrt{n\mathbb{V}}$, there is also an eigenvalue distributed outside the ellipse
\begin{equation}\label{equ43}\left\{\begin{array}{l}
\displaystyle {\left(\frac{x+\mathbb{E}}{a_{N}}\right)}^{2}+{\left(\frac{y}{b_{N}}\right)}^{2}\leq 1,\\
\displaystyle \lambda_{\mathrm{outlier}}=\hat{\lambda}-\mathbb{E}.
\end{array}\right.\end{equation}
\indent
 Three endpoints $Q_{\mathrm{rightmost}}$, $Q_{\mathrm{leftmost}}$ and $Q_{\mathrm{uppermost}}$ (rightmost, leftmost, and uppermost) of the distribution in above ellipse and the unique endpoint $Q_{\mathrm{outlier}}$ corresponding to eigenvalue $\lambda_{\mathrm{outlier}}$ outside the ellipse (if it exists) can then be estimated as
\[
\left\{\begin{array}{l}
Q_{\mathrm{rightmost}}=(a_{N}-\mathbb{E}, 0),\\
Q_{\mathrm{leftmost}}=(-a_{N}-\mathbb{E}, 0), \\
Q_{\mathrm{uppermost}}=(-\mathbb{E}, b_{N}),\\
Q_{\mathrm{outlier}}=(\lambda_{\mathrm{outlier}}, 0)=(\hat{\lambda}-\mathbb{E}, 0).
\end{array}\right.
\]
%where a_{W}=a_{N}-\mathbb{E}+\frac{1}{n}
Furthermore,
\begin{equation}\label{equ_12}\left\{\begin{array}{l}
\mathbb{E}=\frac{\mathbb{E}\left(s_{ij}\right)}{C_{\text{m}}}=\frac{\bar{P}}{(n-1)\hat{P}}, \\
w_{ii}-\mathbb{E}=\frac{-\bar{P}}{(n-1)\hat{P}}\\
\sqrt{n\mathbb{V}}=\frac{\sqrt{nP\hat{P}\sigma^2-nP^2\bar{P}^2\mathbb{E}^2\left(|Z|\right)}}{(n-1)P\hat{P}\mathbb{E}\left(|Z|\right)},\\
\zeta=\frac{P^{*}\mathbb{E}^2\left(|Z|\right)-P\bar{P}^2\mathbb{E}^2\left(|Z|\right)}{\hat{P}\sigma^2-P\bar{P}^2\mathbb{E}^2\left(|Z|\right)},\\
\lambda_{\mathrm{outlier}}=\frac{P^{*}+(n-2)P\bar{P}^2}{(n-1)P\hat{P}\bar{P}}.
\end{array}\right.\end{equation}
\indent
According to (\ref{equ_12}), for large $n$, $\sqrt{n\mathbb{V}}>0$ is sufficiently small and $\zeta$ is bound. Moreover, $a_{N}$ and $b_{N}$ are sufficiently small. Furthermore, $|\mathbb{E}|<1$. Moreover, for large $n$, we have
\begin{align*}\lambda_{\mathrm{outlier}}&\approx\frac{(n-2)P\bar{P}^2
}{(n-1)P\hat{P}\bar{P}}
\approx\frac{\bar{P}}{\hat{P}}\label{equ_34}.\end{align*}
Hence, $|\lambda_{\mathrm{outlier}}|\approx 1$ if and only if $|\bar{P}|=\hat{P}$. Since $P_{+/+}+P_{+/0}<1$ and $P_{-/-}+P_{-/0}<1$, we have $|\lambda_{\mathrm{outlier}}|<1$. In summary, for large $n$, $|Q_{\mathrm{rightmost}}|<1$, $|Q_{\mathrm{rightmost}}|<1$, $|Q_{\mathrm{uppermost}}|<1$ and $|\lambda_{\mathrm{outlier}}|<1$. By employing the eigenvalue estimation method concerning four points ($Q_{\mathrm{rightmost}}$, $Q_{\mathrm{leftmost}}$, $Q_{\mathrm{uppermost}}$,  and $Q_{\mathrm{outlier}}$), we have $\rho(W)<1$.
\\
\textbf{Step 3:} In this step, we establish the connection between the eigenvalues of $W$ and that of $A$ in the random mixture case and complex mixed case.

Since
\begin{equation}\label{equ12}\mathrm{det}(\lambda I-A)=\mathrm{det}(\lambda^{\tau_d+1}I-W)=0,\end{equation}
we have $\theta^{\tau_d+1}-\lambda_{i}=0,$ where $\theta$ and $\lambda_{i}$ are the eigenvalues of matrices $A$ and $\tilde{W}$, respectively.

When time delay is considered, Eq. (\ref{equ12}) has $\tau_d+1$
roots for $\tilde{\lambda}_{i}$ and the particular solution takes the form $\theta^{\tau_d+1}-\lambda_{i}=0$. Then, we obtain   $$|\theta^{\tau_d+1}|=|\theta|^{\tau_d+1}=|\lambda_{i}|<1,$$ and $$|\theta|=|\lambda_{i}|^{\frac{1}{\tau_d+1}}<1.$$
Summarizing the above analysis, we complete the proof of this theorem.
\end{proof}
\end{theorem}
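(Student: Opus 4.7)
The overall plan is to reduce convergence of the full delayed augmented system to a bound on the spectral radius of the single matrix $W$ in (\ref{equ11}). The block companion structure of $A$ in (\ref{afrom}) yields the factorization $\det(\lambda I - A) = \det(\lambda^{\tau_d+1}I - W)$, so every eigenvalue $\theta$ of $A$ satisfies $\theta^{\tau_d+1} = \lambda_i(W)$ for some $i$, giving $\rho(A) = \rho(W)^{1/(\tau_d+1)}$. Thus it suffices to prove $\rho(W) < 1$ for each of the two interaction models, and the delay $\tau_d$ only controls how quickly $\rho(A)$ approaches unity from below. I would record this reduction first as a short preliminary observation and then treat the two scenarios separately.

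For the random mixture case, the off-diagonal entries $w_{ij}$ are i.i.d.\ with mean zero and variance $P\sigma^{2}/C_{\mathrm r}^{2}$, where $C_{\mathrm r}$ is the deterministic row-sum in (\ref{equ_30}). The plan is to rescale $W$ so its entries have variance $1/n$ and then invoke the circular law (Lemma \ref{lem31}). This gives
\[
\rho(W) \;\approx\; \sqrt{n\,\operatorname{Var}(w_{ij})} \;=\; \frac{\sqrt{nP\sigma^{2}}}{(n-1)P\,\mathbb{E}(|Z|)},
\]
which is $O(n^{-1/2})$ and hence strictly below $1$ for all sufficiently large $n$.

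For the complex mixture case the entries have nonzero mean $\mathbb{E}$ and a nontrivial pairwise correlation $\mathbb{T} = \mathbb{E}(w_{ij}w_{ji})$, so the circular law no longer applies directly. I would decompose $W = N + \mathbb{E}\,\mathbf{1}\mathbf{1}^{\top} - \mathbb{E}\,I$, where $N$ has mean-zero entries with correlation coefficient $\zeta = (\mathbb{T}-\mathbb{E}^{2})/\mathbb{V}$, and apply the elliptical law (Lemma \ref{lem41}) to the rescaled matrix $N/\sqrt{n\mathbb{V}}$, confining the bulk spectrum of $N$ to an ellipse with semi-axes $\sqrt{n\mathbb{V}}(1\pm\zeta)$. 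A rank-one perturbation argument of BBP type then absorbs $\mathbb{E}\,\mathbf{1}\mathbf{1}^{\top}$: the bulk is preserved, but once $|n\mathbb{E}| > \sqrt{n\mathbb{V}}$ a single outlier $\hat\lambda = n\mathbb{E} + (\mathbb{T}-\mathbb{E}^{2})/\mathbb{E}$ detaches from the ellipse. Shifting by $-\mathbb{E}\,I$ and plugging in the explicit formulas (\ref{equ_12}) shows that the bulk shrinks to $O(n^{-1/2})$ while the outlier approaches the fixed ratio $\bar P/\hat P$. Checking the four extremal candidates $Q_{\mathrm{rightmost}}$, $Q_{\mathrm{leftmost}}$, $Q_{\mathrm{uppermost}}$, $Q_{\mathrm{outlier}}$ then yields $\rho(W) < 1$.

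The main obstacle I anticipate is pinning down the outlier in the complex-mixture case: one must prove strict inequality $|\lambda_{\mathrm{outlier}}| < 1$, not just $\leq 1$. This is exactly where the general-case hypothesis $P_{+/+}+P_{+/0} < 1$ and $P_{-/-}+P_{-/0} < 1$ enters, since it forces $|\bar P| < \hat P$ and so the limiting ratio $\bar P/\hat P$ lies strictly inside the unit disk. The non-Hermitian rank-one perturbation formula also needs some care, as the elliptical law is ordinarily stated for mean-zero ensembles. Once $\rho(W)<1$ is in hand, the companion-matrix identity from the first paragraph gives $\rho(A) = \rho(W)^{1/(\tau_d+1)} < 1$, so $Y(k) \to 0$ and hence $X(k) \to 0$, completing the proof.
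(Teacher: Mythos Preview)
Your proposal is correct and follows essentially the same route as the paper: the circular law for the random-mixture bulk, the elliptical law plus a rank-one (BBP-type) perturbation for the complex-mixture bulk and outlier, the check $|\bar P|<\hat P$ from the standing hypothesis to force $|\lambda_{\mathrm{outlier}}|<1$, and the companion-matrix identity $\det(\lambda I-A)=\det(\lambda^{\tau_d+1}I-W)$ to transfer $\rho(W)<1$ to $\rho(A)<1$. The only cosmetic difference is that you place the companion-matrix reduction first, whereas the paper records it as the final Step~3.
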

\begin{remark}
Compared to existing research \cite{XI08,Liu19}, Theorem \ref{the1} relaxes the conditions for convergence by shifting from the requirement that all nodes have self-loops to the requirement that every CSCC contains self-loops. Furthermore, Theorem \ref{the3} provides some interaction networks without self-loops in any individual node, which theoretically indicates that the presence of self-loops is not a necessary condition for ensuring convergence.
\end{remark}
\subsection{Convergence rate of the discrete-time opinion dynamics}\label{subsec3.3}
As shown in Fig. \ref{pic-2}, time delay has a significant effect on the convergence rate of discrete-time opinion dynamics (\ref{equ2}), which motivates us to further theoretically analyze the specific effect of time delay on the convergence rate of opinion dynamics (\ref{equ2}).
 \begin{definition}
The convergence rate of system (\ref{equ2}) is defined as $R_{\tau_d}=-log|\theta_{1}|$, where
 the eigenvalues $\theta_{i}:=\lambda_{i}(A)$ are ordered according to \begin{equation}\label{equ131}|\theta_{1}|\geq|\theta_{2}|
\geq\cdots\geq|\theta_{n(\tau_{d}+1)}|.\end{equation}
\end{definition}
\begin{theorem}\label{the4}
For both random and complex mixed interaction scenarios, the convergence rate of  system (\ref{equ2}) decreases as the time delay $\tau_{d}$ increases.
\begin{proof}
Since $w_{ii}=0$, $\forall i\in[n]$, we have $\hat{W}=O$ and $\tilde{W}=W$. Then, $\theta$ is the root of the characteristic polynomial $$\theta^{\tau_d+1}-\lambda_{i}=0, i\in[n],$$  where $\lambda_{i}$ and $\theta$ are the $i$th eigenvalue of matrix $W$ and the eigenvalue of matrix $A$, respectively. Since $|\lambda_{i}|<1$, we obtain the function $f(|\lambda_{i}|,\tau_d):=|\lambda_{i}|^{\frac{1}{\tau_d+1}}$, which is monotonically decreasing with respect to $\tau_d$ and monotonically increasing with respect to $|\lambda_{i}|$.
\end{proof}
\end{theorem}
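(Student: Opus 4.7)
The plan is to exploit the block-companion structure of the augmented matrix $A$ in (\ref{afrom}) and reduce the eigenvalue problem of the $n(\tau_d+1)$-dimensional matrix $A$ to that of the original $n$-dimensional matrix $W$. Since both interaction scenarios in Case 1 and Case 2 have no self-loops, I first observe that $w_{ii}=0$ for every $i\in[n]$, so $\hat{W}=O$ and $\tilde{W}=W$ in (\ref{afrom}). Plugging this into the determinantal identity (\ref{equ12}) (already available in the paper) yields
\begin{equation*}
\det(\theta I - A) \;=\; \det\!\bigl(\theta^{\tau_d+1} I - W\bigr),
\end{equation*}
so every eigenvalue $\theta$ of $A$ satisfies $\theta^{\tau_d+1}=\lambda_i$ for some eigenvalue $\lambda_i$ of $W$, and conversely each $\lambda_i$ produces $\tau_d+1$ eigenvalues of $A$ lying on a circle of radius $|\lambda_i|^{1/(\tau_d+1)}$.

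The second step is to extract the spectral radius. Taking moduli in $\theta^{\tau_d+1}=\lambda_i$ gives $|\theta|=|\lambda_i|^{1/(\tau_d+1)}$. Because the map $x\mapsto x^{1/(\tau_d+1)}$ is strictly increasing on $[0,1)$, the eigenvalue of $A$ with the largest modulus corresponds to the eigenvalue of $W$ with the largest modulus; that is,
\begin{equation*}
|\theta_1| \;=\; \rho(A) \;=\; \rho(W)^{1/(\tau_d+1)}.
\end{equation*}
Theorem \ref{the3} (whose proof was just completed in Section \ref{subsec3.1}) guarantees that $\rho(W)<1$ for both random and complex mixed interaction networks, so the quantity inside the root is a fixed positive number strictly less than one, independent of $\tau_d$.

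The final step is elementary calculus on the definition $R_{\tau_d}=-\log|\theta_1|$. Substituting the previous identity,
\begin{equation*}
R_{\tau_d} \;=\; -\frac{1}{\tau_d+1}\,\log \rho(W),
\end{equation*}
and since $-\log\rho(W)>0$ is a constant while $1/(\tau_d+1)$ is strictly decreasing in $\tau_d$, the convergence rate $R_{\tau_d}$ is a strictly decreasing function of $\tau_d$, which is precisely the claim.

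There is essentially no conceptual obstacle once the block-companion reduction is in place; the only subtlety worth flagging is that one must verify the maximum-modulus eigenvalue of $A$ indeed arises from the maximum-modulus eigenvalue of $W$, which is immediate from the monotonicity of the $(\tau_d+1)$-th root on nonnegative reals, and that the bound $\rho(W)<1$ (needed to make $-\log\rho(W)>0$ and to apply the monotonicity on $[0,1)$) is imported from Theorem \ref{the3} rather than re-derived here.
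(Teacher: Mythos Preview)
Your proposal is correct and follows essentially the same route as the paper: the block-companion reduction via (\ref{equ12}) to obtain $\theta^{\tau_d+1}=\lambda_i$, the invocation of Theorem~\ref{the3} for $\rho(W)<1$, and the monotonicity argument on $|\lambda_i|^{1/(\tau_d+1)}$. Your version is in fact more explicit, carrying the computation through to $R_{\tau_d}=-\frac{1}{\tau_d+1}\log\rho(W)$ and correctly identifying which factor is decreasing, whereas the paper's proof stops at the function $f(|\lambda_i|,\tau_d)$ and (somewhat confusingly) states that $f$ is decreasing in $\tau_d$ when in fact $|\lambda_i|^{1/(\tau_d+1)}$ is increasing in $\tau_d$ for $|\lambda_i|<1$; it is $R_{\tau_d}=-\log f$ that decreases, exactly as you write.
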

\begin{remark}
The analysis of the random and complex interaction scenarios not only reveals that the system can still converge even without self-loops, but also facilitates our investigation into the impact of time delays on the convergence rate of discrete-time system (\ref{equ2}).
%Specifically, consider the simplest case where all self-loops have the same weight $d$. Then, the relationship between the eigenvalues of matrices $\tilde{W}$ and $A$ satisfies
%$\theta^{\tau_d+1}-\theta^{\tau_d}d-\lambda_{i}=0, i\in[n]$. However, it is difficult for us to directly solve all roots of this complex-coefficient polynomial.
\end{remark}
Based on the monotonicity of  $f(|\lambda_{i}|,\tau_d):=|\lambda_{i}|^{\frac{1}{\tau_d+1}}$ with respect to $|\lambda_{i}|$, we obtain that the larger the convergence rate of delay-free system is, the larger the convergence rate of corresponding delayed system is. Accordingly, the findings from our previous work in \cite{Yao} regarding the impact of interaction types on the convergence rate of delay-free systems are equally applicable to the delayed system (\ref{equ2}).
\subsection{Convergence of the continuous-time opinion dynamics}
In this subsection, we focus on the convergence boundary of time delay to ensure the convergence of the continuous-time system (\ref{equ1c1}) with  random and complex mixture networks.
\begin{example}\label{exa4}(Convergence of the continuous-time system (\ref{equ1c1}))
Consider the continuous-time system (\ref{equ1c1}) with three signed interaction matrices $W=W_{t}$, $W=W_{m}$, and $W=W_{tm}$ given in Example \ref{exa2}. From Fig. \ref{fig09}, the system (\ref{equ1c1}) without delay and with delay $\tau_{c}=0.2$ converges, but diverges when $\tau_{c}=1$.
\begin{figure*}[ht]
  \centering
    \includegraphics[scale=0.47]{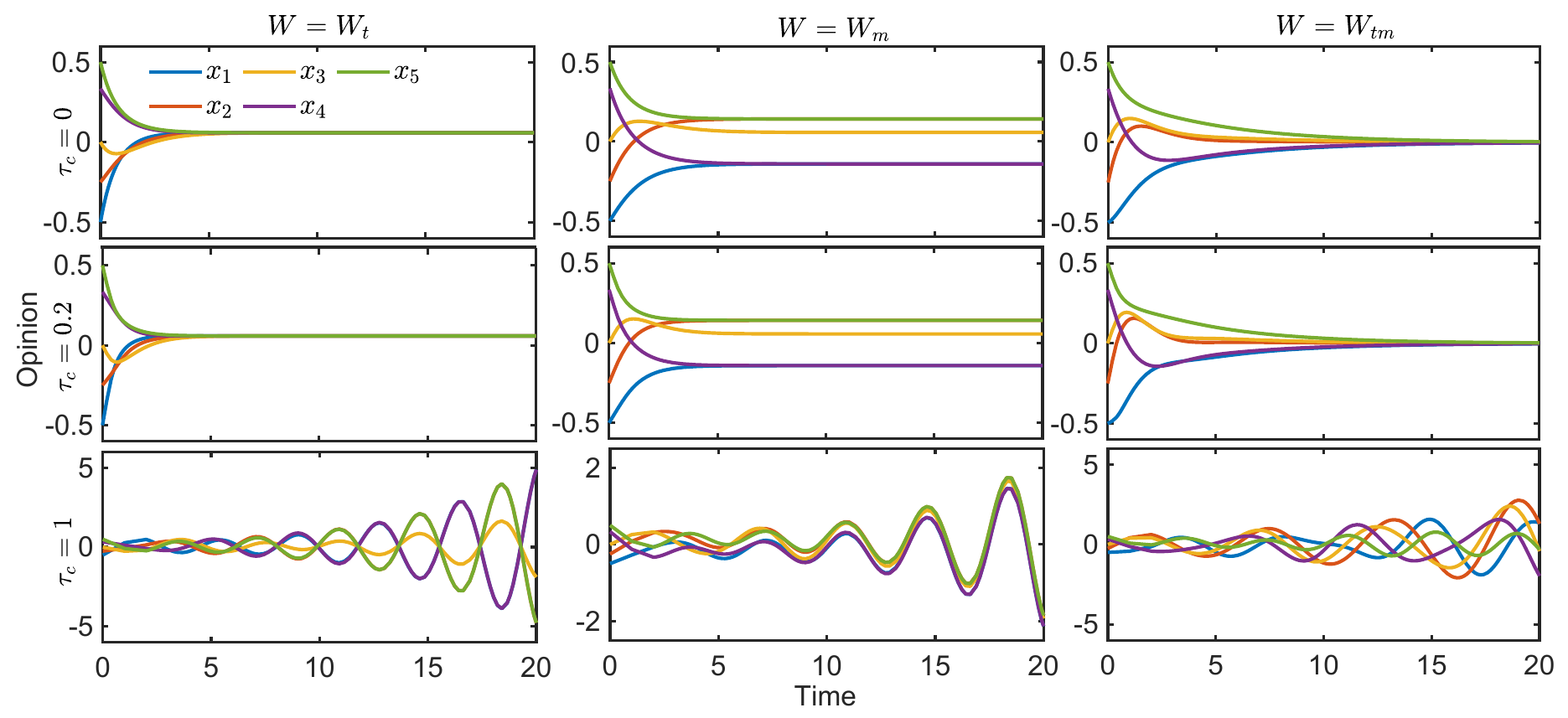}
   \caption{Convergence performances with evolution along the time axis for the continuous-time system (\ref{equ1c1}) associated with three interaction networks of Fig. 2, respectively. }
   \label{fig09}
   \end{figure*}
\end{example}
 %for all three cases, we find that when $\tau_{c}=1$, there always exist some eigenvalues distributed outside the boundary, thus system with time delay $\tau_{c}=1$ can not converge. When $\tau=0.8$, there always exist some eigenvalues distributed outside the boundary for $W=W_{t}$ and $W=W_{m}$ cases, but all eigenvalues lie the boundary for $W=W_{tm}$. Thus,
%\begin{example}\label{exa3}(Convergence of continuous-time system (\ref{equ1c}))
%Consider system (\ref{equ1c}) with signed interaction matrix $W$ as in Example \ref{exa1}. From Fig. \ref{fig07}, we find that if $\tau_{c}=1$, system with time delay $\tau_{c}=1$ can not converge, whereas system with time delay $\tau_{c}=0.24$ can achieve faster convergence than that with $\tau_{c}=0$.
%\end{example}
Example \ref{exa4} demonstrates that, unlike the discrete-time system (\ref{equ2}), the continuous-time system (\ref{equ1c1}) with delays does not necessarily converge. This observation motivates us to investigate the feasible region of the delay to ensure the convergence of system (\ref{equ1c1}).
\begin{lemma}\label{le5}
If all the eigenvalues lie within the interior of the curve $C$, then the system (\ref{equ1c1}) converges to zero, where
\begin{equation}\label{equ471}C(x,y)=\frac{1}{\sqrt{x^2+y^2}} \arctan\left(-\frac{x}{y}\right),\end{equation} $x\in\{\mathrm {Re}(\alpha_{1}),\mathrm {Re}(\alpha_{2}),\cdots,\mathrm {Re}(\alpha_{n})\}$ and $y\in\{\mathrm {Im}(\alpha_{1}),$ $\mathrm {Im}(\alpha_{2}),\cdots,\mathrm {Im}(\alpha_{n})\}$. Moreover, as $\tau_{c}$ increases, this convergence region shrinks.
\begin{proof}
Since the sign change (from a negative value to a positive value) of $\operatorname{Re}(z)$ occurs when $z=i \cdot \omega, \omega \in \mathbb{R}$, by the symmetry of $z$ given in (\ref{c4}), we only need to consider nonnegative $\omega$, that is, $z=i \cdot \omega, \omega \in \mathbb{R}_0^{+}$. Substituting $z=i \cdot \omega$ into the characteristic Eq. (\ref{c4}), we have
\begin{equation}\label{eqn1}i \cdot \omega-(x+i \cdot y) \mathrm{e}^{-i \omega \tau_{c}}=0,\end{equation}
where $x=\operatorname{Re}(\lambda), y=\operatorname{Im}(\lambda)$. Moreover, Eq. (\ref{eqn1}) can be further written as
\begin{equation}\label{c5}\left\{\begin{array}{l}x \cos (\omega \tau_{c})+y \sin (\omega \tau_{c})=0, \\-x \sin (\omega \tau_{c})+y \cos (\omega \tau_{c})=\omega.
\end{array}\right.\end{equation}
\indent
Firstly, we claim the equality $\omega=\sqrt{x^2+y^2}$ holds.
Square the first equation in Eq. (\ref{c5}):
\begin{equation}\label{c9}(x \cos (\omega \tau_{c})+y \sin (\omega \tau_{c}))^2=0.\end{equation}
Expanding the above Eq. (\ref{c9}) yields:
\begin{equation}\label{c10}x^2 \cos ^2(\omega \tau_{c})+2 x y \cos (\omega \tau_{c}) \sin (\omega \tau_{c})+y^2 \sin ^2(\omega \tau_{c})=0.\end{equation}
Similarly, square the second equation in Eq. (\ref{c5}):
\begin{equation}\label{c11}(-x \sin (\omega \tau_{c})+y \cos (\omega \tau_{c}))^2=\omega^2.\end{equation}
Expanding the above Eq. (\ref{c11}) yields:
\begin{equation}\label{c12}x^2 \sin ^2(\omega \tau_{c})-2 x y \sin (\omega \tau_{c}) \cos (\omega \tau_{c})+y^2 \cos ^2(\omega \tau_{c})=\omega^2.\end{equation}
Then, summing Eqs. (\ref{c10}) and (\ref{c12}),  by $\cos ^2(\omega \tau_{c})+\sin ^2(\omega \tau_{c})=1$, we obtain $$\omega=\sqrt{x^2+y^2}.$$
\indent
Case 1: If $y=0$, then $x\cos (\omega \tau_{c})=0$. Since $x\leq0$, we have $$\omega \tau_{c}=l\pi+\frac{\pi}{2}, l\in\mathbb{Z},$$ or $$\omega \tau_{c}=-l\pi+\frac{\pi}{2}, l\in\mathbb{Z},$$ or $$x=0.$$
\\
Moreover, if $x\neq 0$, since $\omega=\sqrt{x^2+y^2}=-x$ and $\tau_{c}^{*}\geq0, l\in\mathbb{Z}$, we have $$\omega \tau_{c}=l\pi+\frac{\pi}{2}, l\in\mathbb{Z},$$ i.e., $$\tau_{c}=-\frac{(2l+1)\pi}{2x}, l\in\mathbb{Z}.$$  Taking  the periodicity into account, the boundary is $(0, 0)$ or \begin{equation}\label{equ_35}(-\frac{\pi}{2x}, 0).\end{equation}
\\
Case 2: If $y\neq0$, by solving the first equality in Eq. (\ref{c5}), we have
\begin{equation}\label{c6}\cos (\omega \tau_{c})=-\frac{y\sin (\omega \tau_{c})}{x}.\end{equation}
Substituting Eq. (\ref{c6}) into the second equality in Eq. (\ref{c5}), we have
\begin{equation}\label{c7}\sin (\omega \tau_{c})=-\frac{\omega x}{x^2+y^2}.\end{equation}
Next, substituting Eq. (\ref{c7}) into the first equality in Eq. (\ref{c5}) yields
\begin{equation}\label{c8}\cos (\omega \tau_{c})=-\frac{y\omega}{x^2+y^2}.\end{equation}
Since
$$\tan(\omega \tau_{c})= \frac{\sin (\omega \tau_{c})}{\cos (\omega \tau_{c})}=-\frac{x}{y},$$
we have
$$\tau_{c}=\frac{1}{\omega}\arctan(-\frac{x}{y}).$$
Thus, the boundary of $\tau_{c}$ is
$$\frac{1}{\sqrt{x^2+y^2}}\arctan\left(-\frac{x}{y}\right),$$
and this boundary encloses a teardrop-shaped region. Due to the continuous dependence of the roots of Eq. (\ref{c4}), if all eigenvalues of $-L$ are located in this region, then all roots of Eq. (\ref{c4}) other than $z=0$ for $\tau_{c}=0$ are located on the open left-half plane.

If $x=0$ and $y\to 0$, $$\lim_{y\to 0}\frac{1}{\sqrt{x^2+y^2}} arctan\left(-\frac{x}{y}\right)=0.$$
If $x\neq0$, $$\lim_{y\to 0^{+}}\frac{1}{\sqrt{x^2+y^2}}arctan\left(-\frac{x}{y}\right)=-\frac{(2l+1)\pi}{2x},$$ which is the same as Eq. (\ref{equ_35}) in Case 1.

Furthermore, let $x=r\cos\theta$ and $y=r\sin\theta$, then
$$r=\frac{arctan(-\cot\theta)}{\tau_{c}}.$$
Thus, as $\tau_{c}$ increases, $r$ becomes smaller.
In summary, we complete the proof of this theorem.
\end{proof}
\end{lemma}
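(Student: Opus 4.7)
The plan is to apply the classical D-subdivision technique to the quasi-polynomial (\ref{c4}). Because (\ref{c4}) factors as $\prod_{k=1}^n (z - \alpha_k \mathrm{e}^{-z\tau_c}) = 0$, the stability question decouples into $n$ scalar problems, one per eigenvalue $\alpha_k = x + iy$ of $-L$, and the overall admissible delay is the minimum over these. For each scalar factor I would search for the smallest $\tau_c > 0$ at which a root first touches the imaginary axis, since this is where a continuous family of roots starting from a Hurwitz configuration at $\tau_c = 0$ can cross into the right half-plane.

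First I would substitute $z = i\omega$ with $\omega \geq 0$ (by conjugate symmetry) into $z - \alpha \mathrm{e}^{-z\tau_c} = 0$ and split into real and imaginary parts, obtaining the system
\begin{equation*}
x\cos(\omega\tau_c) + y\sin(\omega\tau_c) = 0, \qquad -x\sin(\omega\tau_c) + y\cos(\omega\tau_c) = \omega.
\end{equation*}
Squaring and adding eliminates the trigonometric functions and pins down $\omega = \sqrt{x^2+y^2}$. Dividing the two equations then gives $\tan(\omega\tau_c) = -x/y$, so the first positive crossing delay is $\tau_c^\star(x,y) = \frac{1}{\sqrt{x^2+y^2}} \arctan(-x/y)$, which is exactly the curve $C$ in (\ref{equ471}). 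Consequently, if each eigenvalue $\alpha_k$ satisfies $\tau_c < \tau_c^\star(\mathrm{Re}(\alpha_k),\mathrm{Im}(\alpha_k))$, no root of the corresponding scalar factor can lie on the imaginary axis, and by continuous dependence of the roots on $\tau_c$ every root of (\ref{c4}) remains in the open left half-plane, yielding convergence.

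I would handle the degenerate case $y = 0$ separately: the first equation reduces to $x\cos(\omega\tau_c) = 0$, so either $x = 0$ (giving a root only at the origin) or $\omega\tau_c = \pi/2 + \ell\pi$ for $\ell \in \mathbb{Z}$. Combined with $\omega = |x|$ and $x \leq 0$, the smallest positive crossing is $\tau_c = -\pi/(2x)$, which agrees with the one-sided limit $y \to 0^+$ of $C$, so the boundary is continuous. For the monotonicity claim, setting $(x,y) = (r\cos\theta, r\sin\theta)$ reduces the boundary relation to $r\tau_c = \arctan(-\cot\theta)$, so at every angle $\theta$ the boundary radius $r$ is inversely proportional to $\tau_c$, and the teardrop-shaped region accordingly contracts as $\tau_c$ increases.

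The main obstacle I anticipate is rigorously justifying the continuity-of-roots step for a quasi-polynomial rather than a polynomial: the root set is countably infinite, and one must rule out roots entering from infinity as $\tau_c$ grows. I would address this by appealing to standard results on retarded quasi-polynomials, namely that only finitely many roots lie in any half-plane $\mathrm{Re}(z) > -c$ and those roots depend continuously on $\tau_c$, so the only mechanism for losing stability is precisely an imaginary-axis crossing, which the above D-subdivision analysis excludes below the curve $C$.
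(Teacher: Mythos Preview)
Your proposal is correct and follows essentially the same route as the paper: substitute $z=i\omega$ into each scalar factor of (\ref{c4}), split into real and imaginary parts, square and add to pin down $\omega=\sqrt{x^2+y^2}$, divide to obtain $\tan(\omega\tau_c)=-x/y$, treat the $y=0$ case separately and check it matches the limit, invoke continuous dependence of the roots, and finish with the polar-coordinate argument for monotone shrinking. Your explicit remark on ruling out roots entering from infinity for the retarded quasi-polynomial is a point the paper leaves implicit, so your write-up is in fact slightly more careful there.
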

As shown in Fig. \ref{fig12} and Fig. \ref{fig07}, when $\tau_{c}=1$, since there exist eigenvalues that lie outside the boundary, the system (\ref{equ1c1}) can not achieve convergence. This implies that, unlike the discrete-time system (\ref{equ2}), the introduction of time delays imposes stricter requirements on the eigenvalue distribution. Thus, the system (\ref{equ1c1}) with time delay fails to converge for excessive delays.
\begin{figure*}[ht]
    \centering
    \includegraphics[scale=0.53]{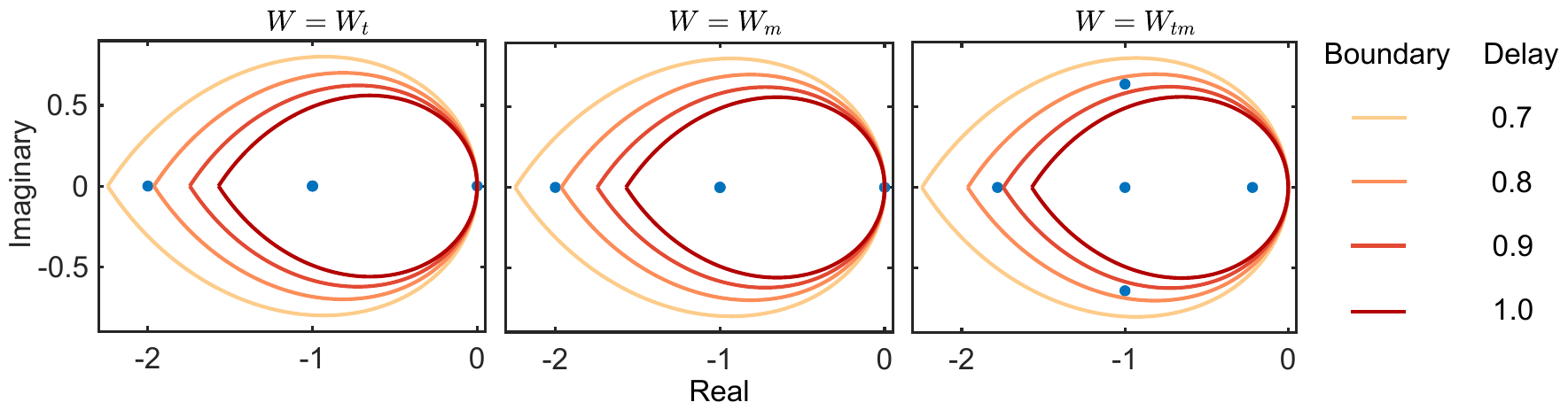}
    \caption{The boundary of the eigenvalue distribution of matrix $-L$ that ensures the convergence of system (\ref{equ1c1}). Solid lines and dots are obtained from our theory results and numerical simulations, respectively. Solid line denotes the curve C in Lemma 7 and dots are the eigenvalues of matrix $-L$.}
    \label{fig12}
    \end{figure*}
\begin{figure*}[ht]
  \centering
  \includegraphics[scale=0.5]{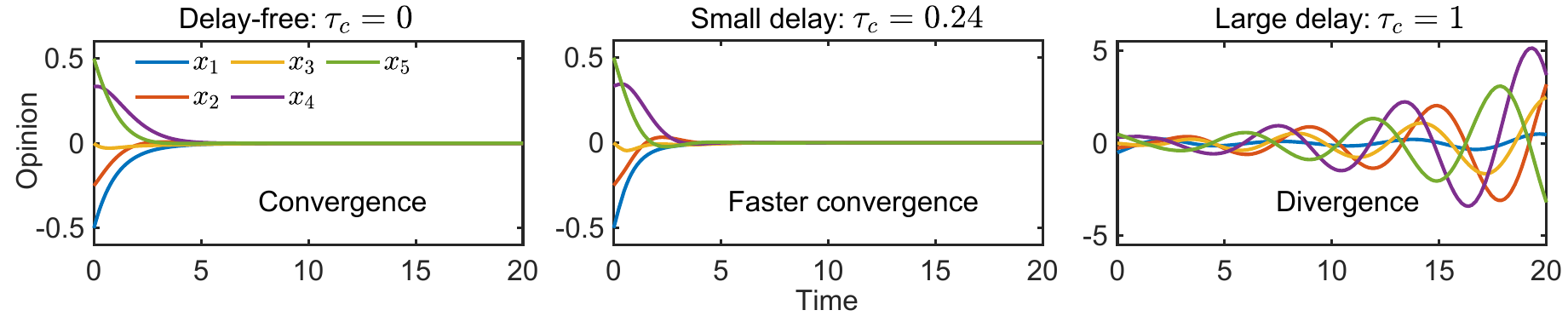}
  \caption{Convergence performances with evolution along the time axis for the continuous-time system (\ref{equ1c1}) with different delays. When $\tau_{c}=0.24$,  the system (\ref{equ1c1}) converges faster than when $\tau_{c}=0$.}
\label{fig07}
\end{figure*}
On the basis of Lemma \ref{le5}, we present the feasible region of the delay ${\tau}_{c}^{*}$ to ensure the convergence of system (\ref{equ1c1}) as follows.
\begin{lemma}\label{lem12}
The continuous-time system (\ref{equ1c1}) is exponentially stable if and only if $\tau_{c}\in[0,{\tau}_{c}^{*})$, where
$${\tau}_{c}^{*}=\min_{i\in[n]}\{{\tau}_{c,i}^{*}\},$$ $${\tau}_{c,i}^{*}=\min_{i\in[n]}\left\{\frac{1}{\sqrt{x_{i}^2+y_{i}^2}} \left|\arctan\left(-\frac{x_{i}}{y_{i}}\right)\right|\right\}.$$
\end{lemma}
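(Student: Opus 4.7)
The plan is to invert the eigenvalue-based convergence criterion established in Lemma~\ref{le5}: rather than asking which eigenvalues lie inside the convergence curve $C$ for a fixed $\tau_c$, I will fix the eigenvalues $\alpha_i = x_i + i y_i$ of $-L$ and ask for the largest $\tau_c$ that still keeps all of them strictly inside $C$. Since increasing $\tau_c$ shrinks the convergence region (the last displayed relation $r = \arctan(-\cot\theta)/\tau_c$ in the proof of Lemma~\ref{le5} shows $r$ is monotone decreasing in $\tau_c$ at fixed angular direction), there is a unique critical value $\tau_{c,i}^*$ at which the eigenvalue $\alpha_i$ first reaches the boundary; the overall admissible bound is then the minimum over $i$.

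For the \emph{sufficiency} direction, I would proceed as follows. First, at $\tau_c = 0$, the system $\dot X = -LX$ is exponentially stable because all nonzero eigenvalues of $-L$ have strictly negative real parts (recall $\mathrm{Re}(\alpha_i)\le 0$ with equality only when $\alpha_i=0$, which is precluded by the stability assumption on $-L$ carried over from the earlier sections). Next, I would compute $\tau_{c,i}^*$ by looking for the smallest $\tau_c>0$ at which the $i$th factor $z - \alpha_i e^{-z\tau_c}$ of the characteristic equation (\ref{c4}) admits a purely imaginary root $z = i\omega$. Following exactly the calculation in the proof of Lemma~\ref{le5}, this yields $\omega = \sqrt{x_i^2 + y_i^2}$ and $\tau_c = \omega^{-1}|\arctan(-x_i/y_i)|$, i.e., $\tau_{c,i}^* = |\arctan(-x_i/y_i)|/\sqrt{x_i^2+y_i^2}$ (with the cases $y_i = 0$ or $x_i = 0$ handled as in Lemma~\ref{le5}). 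By a standard continuity argument for roots of entire functions of exponential type applied to the factors of (\ref{c4}), for every $\tau_c \in [0, \tau_c^*)$ with $\tau_c^* = \min_i \tau_{c,i}^*$, no root of (\ref{c4}) can have crossed the imaginary axis, hence all roots lie in the open left half-plane and the system (\ref{equ1c1}) is exponentially stable.

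For the \emph{necessity} direction, I would argue by contradiction: if $\tau_c \ge \tau_c^*$, then there exists an index $i_0$ achieving the minimum and a frequency $\omega_0 = \sqrt{x_{i_0}^2 + y_{i_0}^2}$ such that $i\omega_0$ is a root of the $i_0$th factor of (\ref{c4}) at $\tau_c = \tau_{c,i_0}^*$. A crossing direction computation (differentiating the implicit equation $z = \alpha_{i_0}e^{-z\tau_c}$ with respect to $\tau_c$ and evaluating the sign of $\mathrm{d}\,\mathrm{Re}(z)/\mathrm{d}\tau_c$ at the crossing point) shows that this root moves from the left half-plane into the right half-plane as $\tau_c$ passes through $\tau_{c,i_0}^*$, so exponential stability is lost.

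The main obstacle will be making the continuity/crossing argument watertight: the characteristic equation (\ref{c4}) has infinitely many roots, so I must justify that on the compact subset of the imaginary axis relevant for stability loss, the \emph{first} crossing as $\tau_c$ grows from $0$ is precisely at $\tau_c^*$, and that the crossing is transversal with the correct sign. A further subtlety is that the formula for $\tau_{c,i}^*$ must be interpreted with the principal branch of $\arctan$ adjusted so that $\tau_{c,i}^*$ is the \emph{smallest} positive solution of the transcendental system (\ref{c5}), which forces the absolute value inside the definition of $\tau_{c,i}^*$ and matches the special case $\tau_{c,i}^* = \pi/(2|x_i|)$ derived in Lemma~\ref{le5} when $y_i=0$. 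Once these two points are resolved, the lemma follows directly from Lemma~\ref{le5} combined with the monotonic shrinkage of the convergence region in $\tau_c$.
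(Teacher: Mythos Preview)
The paper does not actually supply a proof of Lemma~\ref{lem12}; it is stated immediately after Lemma~\ref{le5} with the sentence ``On the basis of Lemma~\ref{le5}, we present the feasible region of the delay $\tau_c^*$ to ensure the convergence of system (\ref{equ1c1}) as follows,'' and then the text moves directly to Theorem~\ref{the8}. In other words, the authors treat Lemma~\ref{lem12} as a reformulation of Lemma~\ref{le5} obtained by reading the curve $C(x,y)$ as a bound on $\tau_c$ for each fixed eigenvalue rather than as a boundary in eigenvalue space for fixed $\tau_c$.

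Your proposal is exactly this inversion, carried out with appropriate care: you fix the eigenvalues $\alpha_i$, use the monotone shrinkage of the convergence region in $\tau_c$ (the relation $r=\arctan(-\cot\theta)/\tau_c$ from the end of the proof of Lemma~\ref{le5}) to obtain a unique first-crossing time $\tau_{c,i}^*$, and take the minimum over $i$. The sufficiency argument via continuous dependence of roots of the entire factors of (\ref{c4}) and the necessity argument via a transversal crossing at $\tau_c=\tau_c^*$ are the standard and correct way to make this rigorous; the paper simply omits these details. Your handling of the branch of $\arctan$ and of the degenerate case $y_i=0$ (matching the value $\pi/(2|x_i|)$ in (\ref{equ_35})) is also consistent with the paper's computation. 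So your plan is correct and is precisely the argument the paper leaves implicit.
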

%\frac{\sqrt{nP\sigma^2}}{(n-1)P\mathbb{E}(|Z|)}
\begin{theorem}\label{the8}
 If $\tau_{c}\in[0,{\tau}_{c}^{*})$, the continuous-time system (\ref{equ1c1}) with random mixture interactions converges, where ${\tau}_{c}^{*}=\min\{\tau_{c,u},\tau_{c,l}\}$,
\begin{equation}\label{equ48}\tau_{c,u}=\frac{\alpha}{\sqrt{\alpha^{2}+nP\sigma^2}}
\mathrm{arctan}(\frac{\alpha}{\sqrt{nP\sigma^2}}),\end{equation}
\begin{equation}\label{equ49}\tau_{c,l}=\frac{\alpha \pi}{2\alpha+2\sqrt{nP\sigma^2}},\end{equation}
and
$\alpha=(n-1)P\mathbb{E}(|Z|)$.
\begin{proof}
Since the eigenvalues of matrix $-L$ are obtained by shifting all the eigenvalues of matrix $W$ in the complex plane to the right by one unit, we have three points ($Q_{\mathrm{rightmost}}$, $Q_{\mathrm{leftmost}}$, and $Q_{\mathrm{uppermost}}$):
\begin{align}\label{equ47}
\left\{\begin{array}{l}
Q_{\mathrm{rightmost}}=(-1+\frac{\sqrt{nP\sigma^2}}{(n-1)P\mathbb{E}(|Z|)},0),\\
Q_{\mathrm{leftmost}}=(-1-\frac{\sqrt{nP\sigma^2}}{(n-1)P\mathbb{E}(|Z|)}, 0), \\
Q_{\mathrm{uppermost}}=(-1, \frac{\sqrt{nP\sigma^2}}{(n-1)P\mathbb{E}(|Z|)}).\\
\end{array}\right.
\end{align}
 Notice that the symmetry of the curve $C$ and the distribution of eigenvalues associated with $W$. By the eigenvalue estimation method concerning these three points and Lemmas \ref{le5}-\ref{lem12}, substituting $Q_{\mathrm{rightmost}}$, $Q_{\mathrm{leftmost}}$ and $Q_{\mathrm{uppermost}}$ in Eq. (\ref{equ47}) into Eq. (\ref{equ471}), we then obtain $\tau_{c,r}=\frac{(n-1)\pi P\mathbb{E}(|Z|)}{2(n-1)P\mathbb{E}(|Z|)-2\sqrt{nP\sigma^2}}$, $\tau_{c,u}$ and $\tau_{c,l}$ as given in (\ref{equ48}) and (\ref{equ49}). Since $\tau_{c,r}<\tau_{c,l}$, if $\tau_{c}<\min\{\tau_{c,u},\tau_{c,l}\}$, the continuous-time delayed system (\ref{equ1c1}) with random mixture interactions converges.
\end{proof}
\end{theorem}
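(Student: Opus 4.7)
The plan is to combine the eigenvalue distribution of $W$ established in Step~1 of the proof of Theorem~\ref{the3} with the delay-bound characterisation in Lemma~\ref{lem12}. Lemma~\ref{lem12} says the critical delay equals $\min_{i}\tau_{c,i}^{*}$ with $\tau_{c,i}^{*}=\frac{1}{\sqrt{x_{i}^{2}+y_{i}^{2}}}|\arctan(-x_{i}/y_{i})|$, so the task reduces to locating the eigenvalue cloud of $-L$ in the complex plane and minimising this simple function over that cloud.

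First I would translate the spectrum of $W$ into the spectrum of $-L$. In the random-mixture construction one has $w_{ii}=0$ and $\sum_{j}|w_{ij}|=1$, so under the sign convention used in Eq.~(\ref{c3}) every eigenvalue of $-L$ equals the corresponding eigenvalue of $W$ shifted to the left by one. Invoking Step~1 of Theorem~\ref{the3}, for large $n$ the eigenvalues of $W$ fill out a disk of radius $\rho(W)=\sqrt{nP\sigma^{2}}/\alpha$ around the origin, where $\alpha=(n-1)P\mathbb{E}(|Z|)$; hence the eigenvalues of $-L$ lie in the disk of the same radius centred at $(-1,0)$, whose three extreme points are precisely $Q_{\mathrm{rightmost}}$, $Q_{\mathrm{leftmost}}$, $Q_{\mathrm{uppermost}}$ of Eq.~(\ref{equ47}).

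Next I would evaluate the bound of Lemma~\ref{lem12} at each of these three extrema. At the two real-axis points, $y\to 0^{+}$ forces $|\arctan(-x/y)|\to\pi/2$, yielding $\tau_{c,r}=\pi\alpha/[2(\alpha-\sqrt{nP\sigma^{2}})]$ from $Q_{\mathrm{rightmost}}$ and the expression $\tau_{c,l}$ of Eq.~(\ref{equ49}) from $Q_{\mathrm{leftmost}}$. At $Q_{\mathrm{uppermost}}=(-1,\rho(W))$ one computes directly $-x/y=\alpha/\sqrt{nP\sigma^{2}}$ and $\sqrt{x^{2}+y^{2}}=\sqrt{\alpha^{2}+nP\sigma^{2}}/\alpha$, reproducing the closed form $\tau_{c,u}$ in Eq.~(\ref{equ48}). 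Because $\alpha-\sqrt{nP\sigma^{2}}<\alpha+\sqrt{nP\sigma^{2}}$ we have $\tau_{c,r}>\tau_{c,l}$, so the rightmost eigenvalue never binds and the effective minimum collapses to $\min\{\tau_{c,u},\tau_{c,l}\}$, the claimed bound.

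The main obstacle, I expect, is justifying that the continuum minimum of $\tau_{c}^{*}(x,y)$ over the entire disk is actually attained at one of these three boundary extrema rather than at some other boundary or interior point. Writing the disk in polar coordinates $x=r\cos\theta$, $y=r\sin\theta$ with $\theta\in(\pi/2,\pi]$ (the disk lies strictly in the open left half-plane since $\rho(W)<1$), the function simplifies to $\tau_{c}^{*}=(\theta-\pi/2)/r$, which is strictly decreasing in $r$ for each fixed $\theta$. Because the disk does not contain the origin, every interior point admits a farther-out boundary point on the same ray from $0$, so the minimum lies on the bounding circle $(x+1)^{2}+y^{2}=\rho(W)^{2}$; a one-parameter check on that circle then identifies its critical points as exactly the three extrema of Eq.~(\ref{equ47}), completing the reduction and letting the theorem follow from the pointwise comparison above together with Lemma~\ref{lem12}.
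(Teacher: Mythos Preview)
Your approach is essentially identical to the paper's: locate the three extremal points of the eigenvalue disk of $-L$ (via Step~1 of Theorem~\ref{the3} and a unit shift) and plug them into the formula of Lemma~\ref{lem12}, then discard $\tau_{c,r}$ as non-binding. The paper justifies the reduction to three points only by invoking ``symmetry'' and ``the eigenvalue estimation method,'' so your polar-coordinate sketch is if anything more careful; you also correctly obtain $\tau_{c,r}>\tau_{c,l}$, whereas the paper writes the reversed inequality---evidently a typo, since its conclusion agrees with yours.
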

\indent
Similarly, for the complex mixture scenario, let
\begin{align}\label{equ50}
\left\{\begin{array}{l}
\tau_{c,r}=\displaystyle\frac{\pi}{2|a_{N}-\mathbb{E}-1|},\\
\tau_{c,l}=\displaystyle\frac{\pi}{2|-a_{N}-\mathbb{E}-1|}, \\
\tau_{c,u}=\displaystyle\frac{1}{\sqrt{(\mathbb{E}-1)^{2}+b_{N}^{2}}}\text{arctan}(\frac{1-\mathbb{E}}{b_{N}}),\\
\tau_{c,\mathrm{out}}=\displaystyle\frac{\pi}{2|\hat\lambda-\mathbb{E}-1|}.
\end{array}\right.
\end{align}
where $a_{N}$, $b_{N}$, $\mathbb{E}$, and $\hat\lambda$ are defined in Theorem \ref{the3}.
\begin{theorem}\label{the9}
 If $\tau_{c}\in[0,{\tau}_{c}^{*})$, the continuous-time system (\ref{equ1c1}) with complex mixed interactions converges, where
\begin{align}\label{equ51}
\left\{\begin{array}{l}
{\tau}_{c}^{*}=\min\{\tau_{c,r},\tau_{c,l},\tau_{c,u}\}, \ |n\mathbb{E}|\leq \sqrt{n\mathbb{V}}, \\
{\tau}_{c}^{*}=\min\{\tau_{c,r},\tau_{c,l},\tau_{c,u},\tau_{c,\mathrm{out}}\}, \ |n\mathbb{E}|>\sqrt{n\mathbb{V}},
\end{array}\right.
\end{align}
where $a_{N}$, $b_{N}$, $\mathbb{E}$, $\mathbb{V}$ and $\hat\lambda$ are defined in Theorem \ref{the3}.
\begin{proof}
Since this theorem can be similarly established as Theorem \ref{the8}, we omit its proof for brevity.
\end{proof}
\end{theorem}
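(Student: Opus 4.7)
The strategy mirrors the proof of Theorem \ref{the8}: locate the eigenvalues of $-L$, isolate the extreme points of the spectrum, evaluate the critical-delay function from Lemma \ref{le5} at these points, and invoke Lemma \ref{lem12} to assemble the bound. The structural novelty is that for the complex mixed scenario the spectrum of $W$ fills an ellipse (together with a real-axis outlier when $|n\mathbb{E}|>\sqrt{n\mathbb{V}}$) rather than a disk, so more candidate extreme points enter the analysis.

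First, since $w_{ii}=0$ for every node, the Laplacian in (\ref{c3}) satisfies $-L = W - I$, so the spectrum of $-L$ is the spectrum of $W$ translated by $-1$ along the real axis. Theorem \ref{the3} already characterizes $\sigma(W)$: the bulk fills the ellipse (\ref{equ42}) centered at $(-\mathbb{E},0)$ with semi-axes $a_N$ (horizontal) and $b_N$ (vertical), and in the regime $|n\mathbb{E}|>\sqrt{n\mathbb{V}}$ there is an additional real eigenvalue $\lambda_{\mathrm{outlier}}=\hat\lambda-\mathbb{E}$. Translating by $-1$ yields up to four candidate critical points for $-L$: the rightmost, leftmost, and uppermost points of the shifted ellipse, together with the shifted outlier when present.

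Second, I would substitute each candidate into the critical-delay function $\tau_c^{\ast}(x,y)=\frac{1}{\sqrt{x^2+y^2}}\bigl|\arctan(-x/y)\bigr|$ from Lemma \ref{le5}. For the three points on the real axis ($y=0$), one uses the limiting form $\pi/(2|x|)$ established in Case 1 of Lemma \ref{le5}'s proof to obtain $\tau_{c,r}$, $\tau_{c,l}$, and $\tau_{c,\mathrm{out}}$; for the uppermost point the general formula gives $\tau_{c,u}$ directly. Combined with Lemma \ref{lem12}, the admissible delay bound $\tau_c^{\ast}$ is the minimum over the active candidates, which yields the two cases displayed in (\ref{equ51}) depending on whether the outlier is present.

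The main obstacle is justifying that only these three or four boundary points can actually be binding. The convergence region of Lemma \ref{le5} is a teardrop that is neither convex nor aligned with the ellipse, so I would need a monotonicity argument showing that $\tau_c^{\ast}(x,y)$ attains its minimum over the closed ellipse-plus-outlier set only at the extremes selected above. Combining the sign of the partial derivatives of $\tau_c^{\ast}$ with the shrinking-region property proved at the end of Lemma \ref{le5} should push the verification onto the ellipse boundary, and along the boundary to this finite set of extremes, closing the argument in parallel with Theorem \ref{the8}.
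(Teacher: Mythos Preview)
Your proposal is correct and follows exactly the route the paper intends: the paper's own proof simply says the argument is analogous to Theorem \ref{the8}, and your outline---shift $\sigma(W)$ by $-1$ via $-L=W-I$, read off the extreme points of the ellipse (plus the outlier when $|n\mathbb{E}|>\sqrt{n\mathbb{V}}$), evaluate the critical-delay formula from Lemma \ref{le5} at each, and take the minimum via Lemma \ref{lem12}---is precisely that argument spelled out. The concern you raise about rigorously showing that only these extreme points can be binding is legitimate, but the paper's proof of Theorem \ref{the8} handles this at the same informal level (``by the eigenvalue estimation method concerning these three points''), so you are not missing anything relative to the source.
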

In the sequel, we examine the effect of different interaction types on the convergence of system (\ref{equ1c1}) by considering the following four cases:
\begin{eqnarray}
&&\left\{ \begin{array}{l}
\text{Case}\ (a): \ P_{+/-}=1,P_{+/+}=P_{+/0}=P_{-/-}=P_{-/0}=0, \nonumber \\
\text{Case}\ (b): \ P_{+/-}=P_{+/+}=P_{+/0}=\frac{1}{3},P_{-/-}=P_{-/0}=0, \nonumber \\
\text{Case}\ (c): \ P_{+/-}=P_{-/-}=P_{-/0}=\frac{1}{3},P_{+/+}=P_{+/0}=0, \nonumber \\
\text{Case}\ (d): \ P_{+/+}=P_{+/0}=P_{+/-}=P_{-/-}=P_{-/0}=\frac{1}{5}.
\end{array} \right. \nonumber
\end{eqnarray}
\begin{corollary}\label{cor1}
For Cases $(a), (b), (d)$, when $n$ is sufficiently large, ${\tau}_{c}^{*}\approx\displaystyle\frac{\pi}{2}$. For Case $(c)$, ${\tau}_{c}^{*}\approx\displaystyle\frac{5\pi}{16}$, where ${\tau}_{c}^{*}$ is defined in Lemma \ref{lem12}.
\begin{proof}
By Theorem \ref{the9}, we need to consider $\tau_{c,r}$, $\tau_{c,l}$, $\tau_{c,u}$, and $\tau_{\mathrm{c,out}}$.
For Case $(a)$, we have
$$\hat{P}=1,\bar{P}=0, \text{and}~ P^{*}=-1.$$ Furthermore, we know
\begin{align*}\left\{\begin{array}{l}\mathbb{E}=0,\\ a_{N}=\displaystyle\frac{\sqrt{nP}(\sigma^{2}-\mathbb{E}^{2}(|Z|))}{(n-1)P\sigma\mathbb{E}(|Z|)}, \\ b_{N}=\displaystyle\frac{\sqrt{nP}(\sigma^{2}+\mathbb{E}^{2}(|Z|))}{(n-1)P\sigma\mathbb{E}(|Z|)}, \\ \end{array}\right.\end{align*}
and by Theorem \ref{the9}, we have \begin{align*}\left\{\begin{array}{l}
\tau_{c,r}=\displaystyle\frac{(n-1)\pi P\sigma\mathbb{E}(|Z|)}{|2\sqrt{nP}(\mathbb{E}^{2}(|Z|)-\sigma^{2})-2(n-1)P\sigma\mathbb{E}(|Z|)|},\\
\tau_{c,l}=\displaystyle\frac{(n-1)\pi P\sigma\mathbb{E}(|Z|)}{|2\sqrt{nP}(\sigma^{2}-\mathbb{E}^{2}(|Z|))-2(n-1)P\sigma\mathbb{E}(|Z|)|},\\
\tau_{c,u}=\displaystyle\frac{(n-1) P\sigma\mathbb{E}(|Z|)}{\sqrt{(n-1)^{2}P^{2}\sigma^{2}\mathbb{E}^{2}(|Z|)
+nP(\sigma^{2}+\mathbb{E}^{2}(|Z|))^{2}}}\\
\ \ \ \ \times \mathrm{arctan}\large(\displaystyle\frac{(n-1) P\sigma\mathbb{E}(|Z|)}{\sqrt{nP}(\sigma^{2}+\mathbb{E}^{2}(|Z|))}\large).
 \end{array}\right.\end{align*}
 When $n$ is sufficiently large, we have
$$\tau_{c,r}\approx\tau_{c,l}\approx\tau_{c,u}\approx\frac{\pi}{2}.$$

For Case $(b)$, we have
$$\hat{P}=\frac{5}{6},\bar{P}=\frac{1}{2},P^{*}=0.$$ Moreover, since $|n\mathbb{E}|>\sqrt{n\mathbb{V}}$, there is an eigenvalue distributed outside the ellipse and $\hat{\lambda}\approx \frac{\bar{P}}{\hat{P}}=\frac{3}{5}$.
By virtue of Theorem \ref{the9}, we obtain $\tau_{c,r}\approx\tau_{c,l}\approx\tau_{c,u}\approx\frac{\pi}{2}$ and
$$\tau_{\mathrm{c,out}}\approx\frac{5\pi}{4}>\frac{\pi}{2}.$$

For Case $(b)$, by similar analysis, we know
$$\hat{P}=\frac{5}{6},\bar{P}=-\frac{1}{2},P^{*}=0.$$ Since $|n\mathbb{E}|>\sqrt{n\mathbb{V}}$, there is an eigenvalue distributed outside the ellipse and $\hat{\lambda}\approx \frac{\bar{P}}{\hat{P}}=-\frac{3}{5}$.
By Theorem \ref{the9}, we obtain $\tau_{c,r}\approx\tau_{c,l}\approx\tau_{c,u}\approx\frac{\pi}{2}$ and
$$\tau_{\mathrm{c,out}}\approx\frac{5\pi}{16}<\frac{\pi}{2}.$$

For Case $(d)$, similarly, when $n$ is sufficient large, we have
$$\tau_{c,r}\approx\tau_{c,l}\approx\tau_{c,u}\approx\frac{\pi}{2}.$$

In summary, for Cases $(a), (b), (d)$, the threshold of delay $\tau_{c}^{*}$ that ensures the convergence of system (\ref{equ1c1}) is approximately $\displaystyle\frac{\pi}{2}$. For Case $(c)$, the threshold of delay is approximately $\displaystyle\frac{5\pi}{16}$.
\end{proof}
\end{corollary}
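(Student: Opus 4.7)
The plan is to apply Theorem \ref{the9} directly to each of the four prescribed probability profiles. For each case I would substitute the given $P_{\pm/\pm}$ into (\ref{equ_28}), (\ref{equ_29}), and (\ref{equ_299}) to extract $\hat P$, $\bar P$, and $P^{*}$, then use (\ref{equ_33}) together with the asymptotic expressions in (\ref{equ_12}) to read off the bulk statistics $\mathbb{E}$, $\mathbb{V}$, $\zeta$, $a_{N}$, $b_{N}$, and when relevant the outlier $\hat\lambda\approx \bar P/\hat P$. The structural observation that drives everything is that $\hat P = \Theta(1)$ in all four cases, so $a_{N}, b_{N} = O(n^{-1/2})$ and $\mathbb{E}=O(n^{-1})$; i.e., the bulk eigenvalues of $W$ collapse to the origin and those of $-L$ to the single point $-1$ as $n\to\infty$. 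Combined with the formulas in (\ref{equ50}), this forces $\tau_{c,r}, \tau_{c,l}, \tau_{c,u}\to \pi/2$ in every case.

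With this setup, Case $(a)$ is immediate: $\hat P=1$, $\bar P=0$, $P^{*}=-1$, so $\mathbb{E}=0$ and the regime $|n\mathbb{E}|\le\sqrt{n\mathbb{V}}$ of Theorem \ref{the9} applies; hence $\tau_{c}^{*}=\min\{\tau_{c,r},\tau_{c,l},\tau_{c,u}\}\approx \pi/2$. For Cases $(b)$ and $(d)$ I would first verify that $|n\mathbb{E}|>\sqrt{n\mathbb{V}}$, because $\bar P\neq 0$ makes $n\mathbb{E}$ of order one while $\sqrt{n\mathbb{V}}$ decays as $n^{-1/2}$, so an outlier genuinely appears and $\tau_{c,\mathrm{out}}$ enters (\ref{equ51}). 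Using $\hat\lambda\approx\bar P/\hat P$, the outlier of $-L$ lies near $\hat\lambda - 1$, so $\tau_{c,\mathrm{out}}\approx \pi/(2|\hat\lambda-1|)$; for Case $(b)$, $\bar P/\hat P=(1/2)/(5/6)=3/5$, giving $\tau_{c,\mathrm{out}}\approx 5\pi/4\ge \pi/2$, and for Case $(d)$, $\bar P=0$ so $\hat\lambda\to 0$ and $\tau_{c,\mathrm{out}}\approx \pi/2$. In both subcases the minimum in (\ref{equ51}) remains $\pi/2$.

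Case $(c)$ is the essential one. Here $\hat P=5/6$ and $\bar P=-1/2$, so the outlier of $W$ sits near $-3/5$, and the shifted outlier of $-L$ lies near $-8/5$. Substituting into the $\tau_{c,\mathrm{out}}$ formula of (\ref{equ50}) gives $\tau_{c,\mathrm{out}}\approx \pi/(2\cdot 8/5)=5\pi/16$, which is strictly less than $\pi/2$ and therefore dominates the minimum in (\ref{equ51}), delivering $\tau_{c}^{*}\approx 5\pi/16$.

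The main obstacle is the regime-selection bookkeeping: one must verify case by case which branch of (\ref{equ51}) applies by checking $|n\mathbb{E}|$ versus $\sqrt{n\mathbb{V}}$, and then confirm that the $O(n^{-1/2})$ corrections to the bulk bounds do not perturb the leading-order constants $\pi/2$ and $5\pi/16$ in the large-$n$ limit. Once these asymptotic checks are in place, the corollary follows by direct substitution into (\ref{equ50})--(\ref{equ51}).
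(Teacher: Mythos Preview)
Your proposal is correct and follows essentially the same route as the paper: compute $\hat P,\bar P,P^{*}$ for each case, observe that $a_{N},b_{N}=O(n^{-1/2})$ and $\mathbb{E}=O(n^{-1})$ so the bulk thresholds $\tau_{c,r},\tau_{c,l},\tau_{c,u}$ all collapse to $\pi/2$, then check whether an outlier near $\bar P/\hat P$ appears and evaluate $\tau_{c,\mathrm{out}}$ from (\ref{equ50}). The only slip is in your grouping of Case~$(d)$ with Case~$(b)$ under the heading ``$\bar P\neq 0$'': in Case~$(d)$ one has $\bar P=0$, so $|n\mathbb{E}|=0\le\sqrt{n\mathbb{V}}$ and the first branch of (\ref{equ51}) applies with no outlier term---you in fact note $\bar P=0$ a sentence later, so this is just a bookkeeping inconsistency and does not affect the conclusion.
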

\subsection{Convergence rate of the continuous-time opinion dynamics}
As shown in Fig. \ref{fig07}, there exists a small time delay, such that when the delay is less than this threshold, the convergence rate of system (\ref{equ1c1}) accelerates as the delay increases. Moreover, when $\tau_{c}$ is large, the system fails to converge. This motivates us to
discuss the convergence rate of continuous-time system. In the sequel, we use the Lambert $\mathcal{W}$ function as an effective tool for analyzing delayed systems.

The Lambert $\mathcal{W}$ function is defined as the solution of $w\cdot e^w=z$, $z \in \mathbb{C}$, i.e., $w=\mathcal{W}(z)$. Except for $z=0$ for which $\mathcal{W}(0)=0, W$ is a multi-valued function with an infinite number of solutions denoted by $\mathcal{W}_k(z),$ $k\in\mathbb{Z}$, where $\mathcal{W}_k$ is called the $k$th branch of the $\mathcal{W}$ function. For arbitrary $a\in\mathbb{C}$, $\max\{\text{Re}(\mathcal{W}_{k}(a))|k=0, 1, \cdots, \infty\}=\text{Re}(\mathcal{W}_{0}(a))$ holds \cite{robust}.
%By Lemma \ref{lem8}, the convergence rate
%$$R=-\max(\text{Re}(\frac{\mathcal{W}_{0}(-s_{1}\tau_c)}{\tau_c})).$$ Since $\mathcal{W}_{0}(-s_{1}\tau_c)$.

Now, we provide the definition of the convergence rate for the continuous-time system (\ref{equ1c1}) with the help of Lambert $\mathcal{W}$ function.
\begin{definition}\label{def1}
 For the continuous-time delayed system (\ref{equ1c}) with delay $\tau_{c}$, if $\mathrm{Re}(\lambda_{i}(L))>0$ for $\forall i\in[n]$, the convergence rate $$R_{\tau_{c}}:=\max_{k\in[n]} \left\{\frac{\mathrm{Re}(W_0(\alpha_k \tau))}{\tau_{c}}\right\},$$  where
 $\alpha_{k}$ is defined in (\ref{equ13}). For the system (\ref{equ1c1}) without delays, the convergence rate $$R_{0}:=|\mathrm{Re}(\alpha_{1})|.$$
\end{definition}
\indent
Next, we adopt the delay rate gain function defined in \cite{Morad20} to analyze $R_{\tau_{c}}$:
$$
g(x)= \begin{cases}\displaystyle\frac{\text{Re}\left(W_0(x)\right)}{\text{Re}(x)}, & x \in \mathbb{C}_{-}, \\ 1, & x=0.\end{cases}
$$
Then, we have
$$R_{\tau_{c}}=\min_{k\in[n]} \left\{g\left(\alpha_k \tau_c\right)\left|\text{Re}\left(\alpha_k\right)\right|\right\}.$$
 To analyze the convergence rate of system (\ref{equ1c1}), we start by giving some notations defined in \cite{Morad20}:
$$\mathcal{I}_{1}:=\left\{k\in\{1,\ldots,n\}|\text{Re}(\alpha_{k})=\text{Re}(\alpha_{1})\right\},$$
$$\mathcal{I}_{\mathrm{in}}:=\left\{k\in\{1,\ldots,n\}|\text{arg}(\alpha_{k})\in\left(\frac{3\pi}{4},\frac{5\pi}{4}\right)\right\},$$
$$\tilde{\tau}_c:=\min_{i\in[n]} \left\{\eta_i\right\},$$ where $\eta_i$ is the unique solution of $g\left(\alpha_i \tau_c\right)= \displaystyle\frac{\left|\text{Re}\left(\alpha_1\right)\right|}{\left|\text{Re}\left(\alpha_i\right)\right|}$ for $\tau_c\in(0,{\tau}_{c,i}^{*})$.
\begin{lemma}\label{lem81}(See Theorem IV.1 in \cite{Morad20})
Consider the linear delayed system $\dot{\mathbf{x}}(t)=\mathbf{A x}(t-\tau)$ and if $\mathrm{Re}(\lambda_{i}(A))<0$ for $\forall i\in[n]$, then there exists $\tau_{c}\in(0,{\tau}_{c}^{*})$ for which $R_{\tau_{c}}>R_{0}$ if and only if $\mathcal{I}_{1}\subset\mathcal{I}_{\mathrm{in}}$.
\end{lemma}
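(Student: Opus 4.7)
The plan is to decouple the delay problem mode by mode, then reduce it to a first-order analysis of the principal Lambert branch. Diagonalising $A$, the characteristic equation $z-\alpha_k e^{-z\tau_c}=0$ holds on each one-dimensional eigenspace. Multiplying by $\tau_c e^{z\tau_c}$ gives $(z\tau_c)e^{z\tau_c}=\alpha_k\tau_c$, so the rightmost root for the $k$-th mode is $z=\mathcal{W}_0(\alpha_k\tau_c)/\tau_c$. This lets me write the mode-wise slowest-decay rate as $\psi_k(\tau_c):=|\mathrm{Re}(\mathcal{W}_0(\alpha_k\tau_c))|/\tau_c=g(\alpha_k\tau_c)|\mathrm{Re}(\alpha_k)|$, so that $R_{\tau_c}=\min_k\psi_k(\tau_c)$ and $\psi_k(0)=|\mathrm{Re}(\alpha_k)|$, recovering $R_0=\min_k\psi_k(0)=|\mathrm{Re}(\alpha_1)|$.

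Next I would pin down the cone $\mathcal{I}_{\mathrm{in}}$ via the expansion $\mathcal{W}_0(x)=x-x^2+O(x^3)$, which yields
\[
g(x)=1-\frac{\mathrm{Re}(x^2)}{\mathrm{Re}(x)}+O(|x|^2),\qquad x\in\mathbb{C}_-.
\]
Writing $\alpha_k=r_k e^{i\theta_k}$ with $\theta_k\in(\pi/2,3\pi/2)$, the leading change in $\psi_k(\tau_c)-\psi_k(0)$ is proportional to $-\cos(2\theta_k)/\cos\theta_k$; since $\cos\theta_k<0$, this coefficient is strictly positive iff $\cos(2\theta_k)>0$, which collapses to $\theta_k\in(3\pi/4,5\pi/4)$, exactly $k\in\mathcal{I}_{\mathrm{in}}$. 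Hence $\psi_k$ is initially strictly increasing in $\tau_c$ iff $k\in\mathcal{I}_{\mathrm{in}}$ and initially non-increasing otherwise.

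Sufficiency is then a short continuity argument: if $\mathcal{I}_1\subset\mathcal{I}_{\mathrm{in}}$, each $k\in\mathcal{I}_1$ contributes $\psi_k(\tau_c)>R_0$ on some right neighbourhood $(0,\delta_k)$ by the local expansion, while each $k\notin\mathcal{I}_1$ already satisfies $\psi_k(0)>R_0$, so by continuity $\psi_k(\tau_c)>R_0$ on some $(0,\delta_k')$. Taking $\tau_c$ below $\min_k\{\delta_k,\delta_k'\}$ and below $\tau_c^*$ delivers $R_{\tau_c}=\min_k\psi_k(\tau_c)>R_0$.

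Necessity is where the real work lies. If $k^{*}\in\mathcal{I}_1\setminus\mathcal{I}_{\mathrm{in}}$, the target is $\psi_{k^{*}}(\tau_c)\le R_0$ for every $\tau_c\in(0,\tau_c^{*})$, which forces $R_{\tau_c}\le\psi_{k^{*}}(\tau_c)\le R_0$ throughout. The local Taylor bound only handles a right neighbourhood of $0$, so one must propagate $g(\alpha_{k^{*}}\tau_c)\le 1$ across the whole stability interval. I would track the curve $\zeta(\tau_c):=\mathcal{W}_0(\alpha_{k^{*}}\tau_c)$ via the ODE $\dot\zeta=\alpha_{k^{*}}/\bigl((1+\zeta)e^\zeta\bigr)$ obtained by differentiating $\zeta e^\zeta=\alpha_{k^{*}}\tau_c$, and prove that $h(\tau_c):=|\mathrm{Re}(\zeta(\tau_c))|-|\mathrm{Re}(\alpha_{k^{*}})|\tau_c$ stays non-positive on $(0,\tau_{c,k^{*}}^{*})$ by combining the boundary data $h(0)=0$ and $\mathrm{Re}(\zeta(\tau_{c,k^{*}}^{*}))=0$ with the Taylor sign of $h'(0)$. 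Ruling out a rebound of $h$ above zero, equivalently showing that $\zeta(\tau_c)$ always bends toward the imaginary axis rather than away from it whenever $\arg(\alpha_{k^{*}})\notin(3\pi/4,5\pi/4)$, is the geometric heart of the proof and the step I expect to consume almost all of the technical effort.
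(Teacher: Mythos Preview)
The paper does not prove this lemma at all: it is stated as a citation of Theorem~IV.1 in \cite{Morad20} and is then used as a black box in the proof of Theorem~\ref{the12}. There is therefore no in-paper argument to compare your proposal against.

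That said, your sketch is a sensible reconstruction of the type of argument one expects in \cite{Morad20}: the mode-by-mode reduction via the Lambert~$\mathcal{W}$ function is exactly how $R_{\tau_c}$ is defined here, your Taylor expansion correctly identifies the angular cone $(3\pi/4,5\pi/4)$ as the set where $g(\alpha_k\tau_c)$ is initially increasing, and the sufficiency direction is complete. You rightly flag that necessity is the substantive part; your proposed ODE/monotonicity route for showing $g(\alpha_{k^*}\tau_c)\le 1$ on the full interval $(0,\tau_{c,k^*}^*)$ when $k^*\notin\mathcal{I}_{\mathrm{in}}$ is plausible but, as you acknowledge, not yet nailed down. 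If you want to close it, one standard device is to exploit the explicit parametrisation of the level set $\{x:g(x)=1\}$ in the $\mathcal{W}_0$ image and argue that the ray $\tau_c\mapsto\alpha_{k^*}\tau_c$ cannot re-enter the superlevel region once $\arg(\alpha_{k^*})\notin(3\pi/4,5\pi/4)$; this is essentially what the original reference does.
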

\begin{theorem}\label{the12}
For the continuous-time system (\ref{equ1c1}) with random or complex mixed interactions,
 \\
(1). $R_{\tau_c}>R_0$ for $\tau_{c} \in(0, \tilde{\tau}_{c}) \subset(0, {\tau}_{c}^{*});$
 \\
(2). $R_{\tau_{c}}=R_{0}$ at $\tau_{c}=\tilde{\tau}_{c};$
  \\
(3). $R_{\tau_{c}}<R_{0}$ for $\tau_{c} \in(\tilde{\tau}_{c}, {\tau}_{c}^{*})$. Moreover, $R_{\tau_{c}}$ decreases strictly with $\tau_c \in(\tilde{\tau}_c, {\tau}_{c}^{*})$.
\begin{proof}
From Theorem \ref{the3}, $\mathrm{Re}(\alpha_{k})=\mathrm{Re}(\alpha_{1})$ if and only if $k=1$. Moreover, for the random mixture scenario, $$\alpha_{1}=-\frac{\sqrt{nP\sigma^2}}{(n-1)P\mathbb{E}(|Z|)}-1.$$ For the complex mixture scenario,
\begin{align*}
\alpha_{1}=\left\{\begin{array}{l}
a_{N}-\mathbb{E}-1, \ \text{when} \ |n\mathbb{E}|>\sqrt{n\mathbb{V}},\\
\min\{\hat{\lambda}-\mathbb{E}-1,a_{N}-\mathbb{E}-1\},\ \text{when} \ |n\mathbb{E}|\leq\sqrt{n\mathbb{V}}.\\
\end{array}\right.\end{align*}
Thus, $\text{arg}(\alpha_{1})=\pi$. By Lemma \ref{lem81}, there always exists $\tau_{c}\in(0,{\tau}_{c}^{*})$ such that $r_{\tau_{c}}>r_{0}$, which implies that the convergence rate can increase by small time delay. By Theorem IV. 2 in \cite{Morad20}, this theorem can be proved.
\end{proof}
\end{theorem}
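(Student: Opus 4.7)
The plan is to reduce Theorem \ref{the12} to the two results on delay rate gain from \cite{Morad20}: Lemma \ref{lem81} handles existence of a speed-up delay (part (1)), and Theorem IV.2 of \cite{Morad20} handles the transition at $\tilde{\tau}_c$ together with the strict decrease on $(\tilde{\tau}_c,{\tau}_c^*)$ (parts (2) and (3)). The only genuine work for us is to verify the hypothesis of Lemma \ref{lem81}, namely $\mathcal{I}_1\subset\mathcal{I}_{\mathrm{in}}$, by locating the slowest mode $\alpha_1$ of $-L$ using the eigenvalue estimates already obtained in Theorem \ref{the3}.

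First I would inspect the random mixture case. By Theorem \ref{the3}, the spectrum of $W$ is asymptotically uniform on the disk of radius $\rho(W)=\sqrt{nP\sigma^2}/((n-1)P\mathbb{E}(|Z|))<1$ centered at the origin, so the spectrum of $-L=W-I$ lies on the disk of the same radius centered at $(-1,0)$. Under the ordering $|\mathrm{Re}(\alpha_1)|\le|\mathrm{Re}(\alpha_2)|\le\cdots$, the unique eigenvalue achieving the minimum $|\mathrm{Re}(\cdot)|$ is the rightmost point $\alpha_1=-1+\rho(W)$, which is real and negative; hence $\arg(\alpha_1)=\pi$, and $\mathcal{I}_1=\{1\}\subset\mathcal{I}_{\mathrm{in}}$ by definition.

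For the complex mixture case I would repeat the argument using the ellipse and possible outlier from Theorem \ref{the3}. The rightmost point of the ellipse translates to $a_N-\mathbb{E}-1$ on the real axis, and, when $|n\mathbb{E}|>\sqrt{n\mathbb{V}}$, the outlier contributes $\hat\lambda-\mathbb{E}-1$, also on the real axis. Both candidates are real, so whichever one is closer to the imaginary axis is $\alpha_1$, matching the case split written in the theorem; once again $\arg(\alpha_1)=\pi$ and $\mathcal{I}_1\subset\mathcal{I}_{\mathrm{in}}$. With the hypothesis verified, Lemma \ref{lem81} immediately gives part (1), and Theorem IV.2 of \cite{Morad20} upgrades this to parts (2) and (3), delivering the crossing at $\tilde{\tau}_c$ and the strict monotone decrease of $R_{\tau_c}$ on $(\tilde{\tau}_c,{\tau}_c^*)$.

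The main obstacle I anticipate is uniqueness of $\alpha_1$ in the complex mixture case: the definition of $\mathcal{I}_1$ concerns every eigenvalue that \emph{ties} for the smallest $|\mathrm{Re}|$, so I must rule out the possibility that some non-real eigenvalue on the ellipse (or the outlier when the ellipse and outlier coincide at the same abscissa) shares the real part of $\alpha_1$ and lies outside the angular sector $(3\pi/4,5\pi/4)$. Because the boundary of the shifted ellipse is tangent to the vertical line $x=a_N-\mathbb{E}-1$ only at the single point $(a_N-\mathbb{E}-1,0)$, and because the outlier lies strictly on the real axis, this degeneracy is generic only on a measure-zero parameter set; I would handle it either by noting that $\alpha_1$ is isolated for large $n$ under the standing assumption $P_{+/+}+P_{+/0}<1$, $P_{-/-}+P_{-/0}<1$, or by observing that even if several eigenvalues tied, they would all have argument $\pi$ and thus still lie in $\mathcal{I}_{\mathrm{in}}$, so the conclusion of Lemma \ref{lem81} is unaffected.
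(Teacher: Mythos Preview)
Your proposal is correct and follows essentially the same route as the paper: identify $\alpha_1$ from the eigenvalue distribution of Theorem \ref{the3}, observe it is real and negative so that $\arg(\alpha_1)=\pi$ and hence $\mathcal{I}_1\subset\mathcal{I}_{\mathrm{in}}$, then invoke Lemma \ref{lem81} and Theorem IV.2 of \cite{Morad20}. Your treatment is in fact slightly more careful than the paper's, both in correctly locating $\alpha_1$ at the rightmost point of the shifted spectrum and in explicitly addressing the possibility of ties in $\mathcal{I}_1$.
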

\begin{remark}
Theorem \ref{the12} can be regarded as an extension and expansion of the results presented in \cite{Morad20}. Moreover, our results further show that the diversity of interaction types is beneficial to the convergence rate of time-delayed systems.
 %This facilitates the design of fast-converging distributed algorithms for networked systems.
\end{remark}
\section{Numerical examples}\label{sect4}
In this section, some numerical examples are given to illustrate our main results.
\begin{example}\label{examm1}(Convergence of the discrete-time system (\ref{equ2})) This example aims to demonstrate the validity of the theoretical results presented in Theorems \ref{the3} and \ref{the4}. In these theorems, the interaction strengths $s_{ij}$ are randomly selected from a normal distribution with mean $\mathbb{E}(Z) = 0$ and variance $\sigma= 1$. Moreover, $n=100$ and $P=0.5$.

(I). At first, we examine the convergence rate of discrete-time system (\ref{equ2}) with random mixture interactions. All eigenvalues of $W$ remain within the unit disk, which implies that results from numerical simulations align well with the theoretical estimations in Theorem \ref{the3}. Furthermore, as the time delay increases, the convergence rate of system (\ref{equ2}) slows down (see Fig. \ref{fig04} for details).
\begin{figure}[ht]
    \centering
    \includegraphics[scale=0.47]{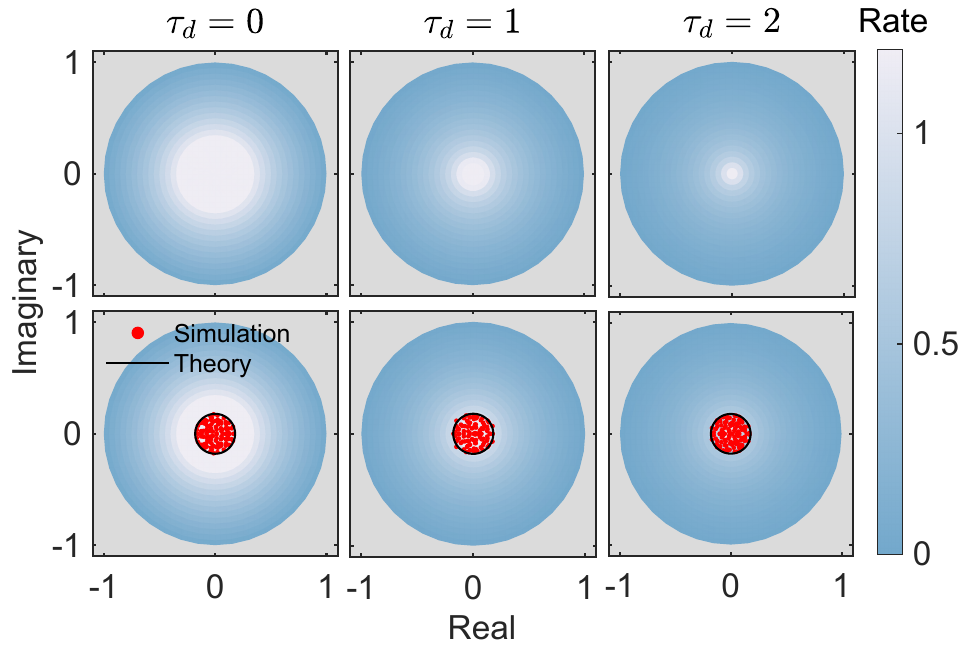}
    \caption{Convergence contour plot in the complex plane at different intensities of time delay. Colors represent the convergence rate, with colors closer to white indicating a larger rate and colors closer to blue indicating a smaller rate. Gray represents the divergence of system (\ref{equ1c1}). Solid lines and dots are obtained from our theory results and numerical simulations, respectively.}
    \label{fig04}
    \end{figure}
\begin{figure}[ht]
    \centering
    \includegraphics[scale=0.52]{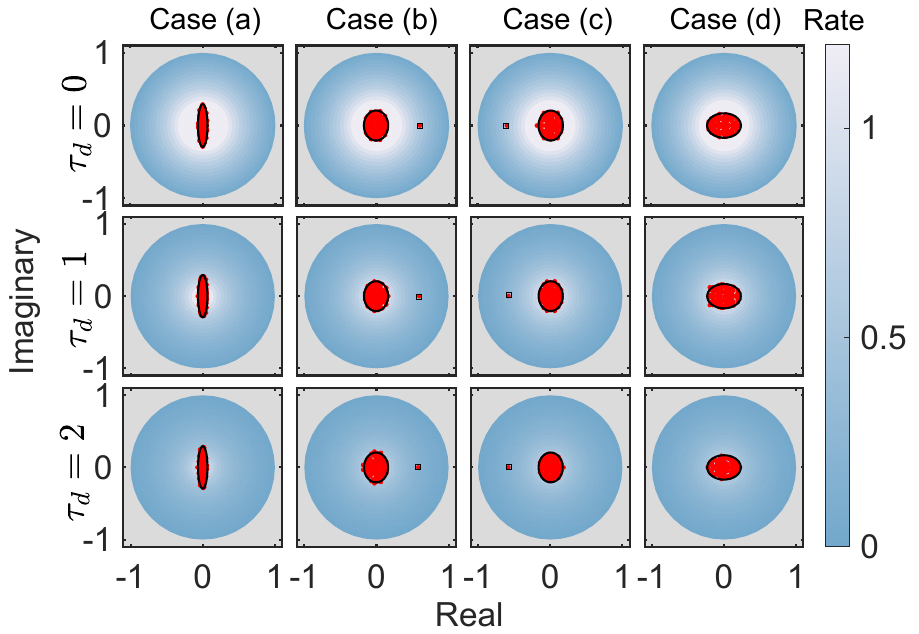}
    \caption{Estimating the eigenvalue distribution of $W$ for four scenarios: \text{Case} $(a)$, \text{Case} $(b)$, \text{Case} $(c)$, and \text{Case} $(d)$.}
    \label{fig05}
    \end{figure}
    \begin{figure}[ht]
    \centering
    \includegraphics[scale=0.5]{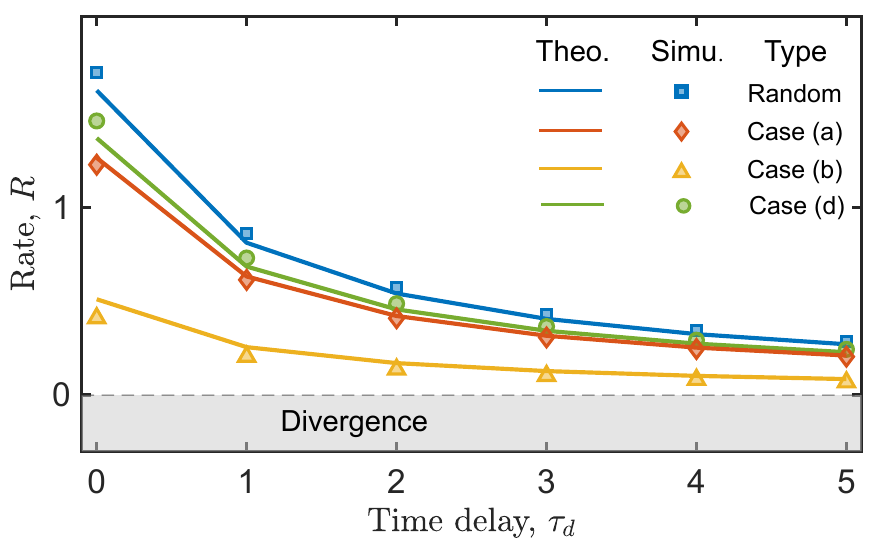}
    \caption{Convergence rate of the system (\ref{equ1}) for four scenarios: random mixture, Case $(a)$, Case $(b)$, and Case $(d)$.}
    \label{fig06}
    \end{figure}

 Next, we investigate the convergence rate of system (\ref{equ2}) with complex mixed interactions by considering four typical cases presented in Corollary 2. As shown in Fig. \ref{fig05}, the distribution of eigenvalues for Case (b) and Case $(c)$ is symmetric with respect to the y-axis, which implies that the convergence rates in these two cases are identical. From Fig. \ref{fig06}, we have $R_{\text{random}} > R_{\text{Case} \ (d)} > R_{\text{Case} \ (a)} > R_{\text{Case} \ (b)}$, indicating that the system (\ref{equ2}) with random mixture interactions exhibits a more rapid convergence than the other cases.
\end{example}

%\begin{example}\label{examm2}(Continuous-time system) This example aims to show the validity of the theoretical results in Theorem \ref{the6}, where we consider the interaction network $\mathcal{G}(W)$ given in Fig. \ref{fig16} $(a)$. Since there exists a negative edge between nodes 1 and 2, $\mathcal{G}(W)$ is strongly connected and structurally balanced. Thus, system (\ref{equ1c}) converges to zero. As shown in Fig. \ref{fig16} $(b)$ and $(c)$, the convergence rate of system (\ref{equ1c}) first increases and then decreases as time delay increases. Moreover, when the delay is excessive, the system (\ref{equ1c}) fails to converge.
%\begin{figure*}[ht]
 %   \centering
 %   \includegraphics[scale=0.4]{fig16.pdf}
 %   \caption{In Fig $(a)$, the interaction network is a undirected cycle. The boundary of time delay to ensure the convergence of system (\ref{equ1c}) with different delays are given in Fig $(b)$. In Fig $(c)$, the curve represents the rate of convergence at various delays.}
 %   \label{fig16}
 %   \end{figure*}
%\end{example}
\begin{example}\label{examm3} (Convergence of the continuous-time system (\ref{equ1c1})) In this example, we use the same parameters as in Example \ref{examm1}.
    \begin{figure}[ht]
    \centering
    \includegraphics[scale=0.5]{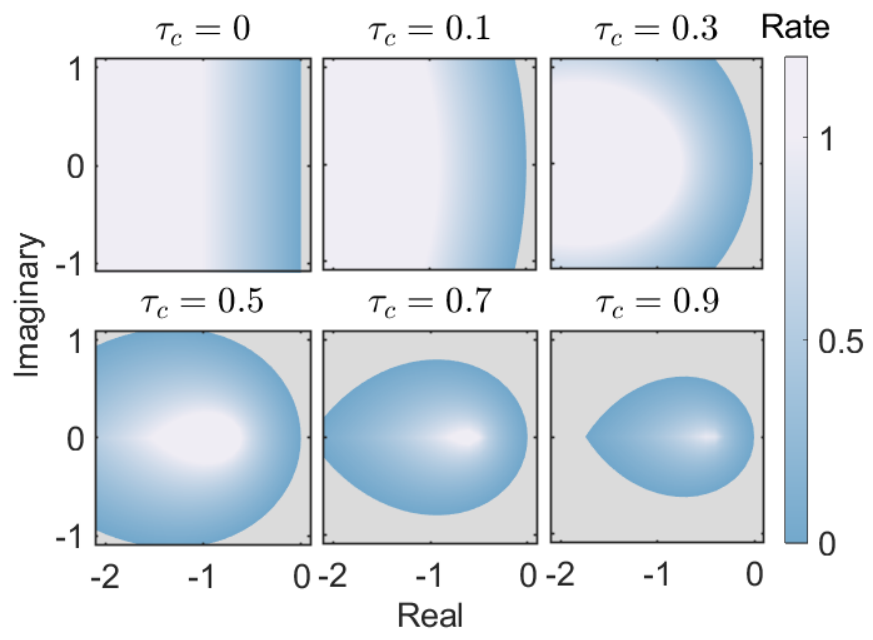}
    \caption{Convergence contour plot in the complex plane at different intensities of time delay. Colors represent the convergence rate of system (\ref{equ1c1}), with colors  closer to white indicating a larger rate and colors  closer to blue indicating a smaller rate. Gray represents the divergence of system (\ref{equ1c1}).}
    \label{fig13}
    \end{figure}
From Fig. \ref{fig13}, it can be observed that the range of feasible eigenvalue distributions for $-L$ that ensures system convergence becomes narrower as the time delay increases. This suggests that large delays will result in the divergence of system (\ref{equ1c1}). Unlike the discrete-time system (\ref{equ2}), the eigenvalues that determine the convergence rate of the continuous-time system (\ref{equ1c1}) are different for Case $(b)$ and Case $(c)$, as shown in Fig. \ref{fig014}.

 As depicted in Fig. \ref{fig15}, for five scenarios (random mixture, \text{Case} $(a)$, \text{Case} $(b)$, \text{Case} $(c)$, \text{Case} $(d)$), the convergence rate of system (\ref{equ1c1}) first increases and then decreases as time delay $\tau_{c}$ increases. Moreover, for the small delays, $R_{\mathrm{Case} \ (a)}>R_{\mathrm{random}}>R_{\mathrm{Case} \ (c)}>R_{\mathrm{Case} \ (d)}>>R_{\mathrm{Case} \ (b)}$. The simulation results are consistent with Theorem \ref{the12}.
 \begin{figure}%[ht]
    \centering
    \includegraphics[scale=0.48]{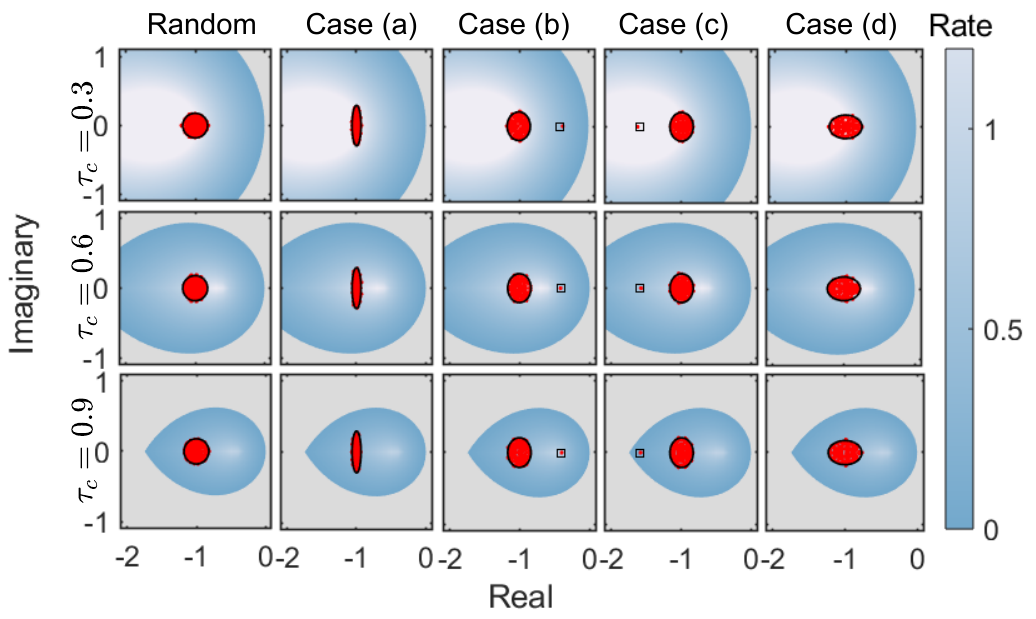}
    \caption{Estimating the eigenvalue distribution of matrix $-L$.}
    \label{fig014}
    \end{figure}
     \begin{figure}
    \centering
    \includegraphics[scale=0.5]{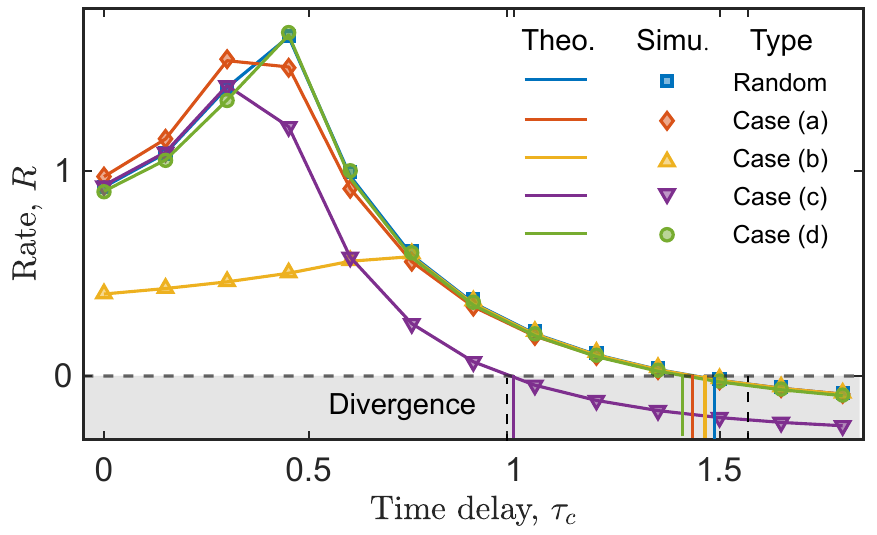}
    \caption{Convergence rate of the system (\ref{equ1c1}) for five scenarios: random mixture, Case $(a)$, Case $(b)$, Case $(c)$, and Case $(d)$. Within the gray box, the intersection points of the vertical solid lines with the x-axis represent the simulated $\tau_{c}^{*}$, while that of the vertical dashed lines with the x-axis represent the $\tau_{c}^{*}$ obtained through theoretical estimation.}
    \label{fig15}
    \end{figure}
\end{example}
\section{Conclusions}\label{sect5}
In this paper, we have systematically established a framework for studying the convergence and convergence rates of both discrete-time and continuous-time opinion dynamics with delays and diverse interaction types. Our results indicate that for continuous-time systems, excessive time delays can lead to system divergence, whereas moderate time delays accelerate convergence, representing a trade-off between time delays and convergence properties. Moreover, different interaction types significantly influence the convergence rate in both delayed and delay-free systems.

Further research in this area could explore the application of our approach to more sophisticated models of opinion dynamics, such as the Friedkin-Johnsen model, the multidimensional opinion model, the model of expressed and private beliefs. In these models, interactions among individuals can be represented as multiplex networks, with time delays occurring both within and between layers. The question of how to adapt our methodology to these complex scenarios remains an open issue for future investigation.

\section*{References}

\end{document}